\newcommand{\vertiii}[1]{{\left\vert\kern-0.25ex\left\vert\kern-0.25ex\left\vert #1
    \right\vert\kern-0.25ex\right\vert\kern-0.25ex\right\vert}}
\newtheorem{Thm}{Theorem}[section]
\newtheorem{cor}[Thm]{Corollary}
\newtheorem{lemma}[Thm]{Lemma}
\newtheorem{proposition}[Thm]{Proposition}
\newtheorem{definition}[Thm]{Definition}
\newtheorem{theorem}[Thm]{Theorem}
\newcommand\cE{{\mathcal{E}}}
\newcommand\cH{{\mathcal{H}}}
\newcommand{\bitem}{\begin{itemize}}
\newcommand{\eitem}{\end{itemize}}
\newcommand{\benum}{\begin{enumerate}}
\newcommand{\eenum}{\end{enumerate}}
\newcommand{\beq}{\begin{equation}}
\newcommand{\eeq}{\end{equation}}
\newcommand{\ip}[2]{\langle#1,#2\rangle}
  \newcommand{\R}{\mathbb{R}}
 \newcommand{\N}{\mathbb{N}}
 \newcommand{\Z}{\mathbb{Z}}
\DeclareMathOperator*{\supp}{supp}
\def\RR{\mathbb{R}}
\def\cH{{\mathcal{H}}}
\def\cH{\mathcal{H}}
\newcommand{\WQ}[1]{#1}
\begin{document}
\title{Dualizable Shearlet Frames and Sparse Approximation}

\author[G. Kutyniok]{Gitta Kutyniok}
\address{Department of Mathematics, Technische Universit\"at Berlin, 10623 Berlin, Germany}
\email{kutyniok@math.tu-berlin.de}

\author[W.-Q Lim]{Wang-Q Lim}
\address{Department of Mathematics, Technische Universit\"at Berlin, 10623 Berlin, Germany}
\email{lim@math.tu-berlin.de}

\thanks{G.K. acknowledges support by the Einstein Foundation Berlin, by the Einstein Center for Mathematics Berlin
(ECMath), by Deutsche Forschungsgemeinschaft (DFG) Grant KU 1446/14, by the DFG Collaborative Research Center TRR 109
``Discretization in Geometry and Dynamics'', and by the DFG Research Center {\sc Matheon} ``Mathematics for key technologies''
in Berlin. Parts of the research for this paper was performed while the first author was visiting the Department of
Mathematics at the ETH Z\"urich. G.K. thanks this department for its hospitality and support during this visit.
W.L. would like to thank the DFG Collaborative Research Center TRR 109 ``Discretization in Geometry and Dynamics''
and the DFG Research Center {\sc Matheon} ``Mathematics for key technologies'' in Berlin for its support.}

\begin{abstract}
Shearlet systems have been introduced as directional representation systems, which provide optimally sparse
approximations of a certain model class of functions governed by anisotropic features while allowing faithful
numerical realizations by a unified treatment of the continuum and digital realm. They are redundant systems,
and their frame properties have been extensively studied. In contrast to certain band-limited shearlets,
compactly supported shearlets provide high spatial localization, but do not constitute Parseval frames. Thus
reconstruction of a signal from shearlet coefficients requires knowledge of a dual frame. However, no closed
and easily computable form of any dual frame is known.

In this paper, we introduce the class of dualizable shearlet systems, which consist of compactly supported
elements and can be proven to form frames for $L^2(\R^2)$. For each such dualizable shearlet system, we then provide an explicit
construction of an associated dual frame, which can be stated in closed form and efficiently computed. We also
show that dualizable shearlet frames still provide optimally sparse approximations of anisotropic features.
\end{abstract}

\keywords{Anisotropic Features, Dual Frames, Frames, Shearlets, Sparse Approximation}

\maketitle

\section{Introduction}\label{sec:intro}

During the last years, methodologies utilizing sparse approximations have had a tremendous impact on
data science. This is foremost due to the method of compressed sensing (see \cite{CRT06,Don06c} or
\cite{DDEK12}), which played a major role in the initiation of today's paradigm that any type of data
admits a sparse representation within a suitably chosen orthonormal basis, or, more generally, a
frame \cite{Chr03}. In fact, frames -- redundant, yet stable systems -- are typically preferable due to the
added flexibility the redundancy provides. However, although a frame might provide even
optimally sparse approximations within a model situation, in the end, one still needs to reconstruct
the data from the respective frame coefficients. For an orthonormal basis, this can be achieved by
the classical decomposition formula. In the situation of a frame though, a so-called dual frame is
required.

In this paper, we will consider this problem in the situation of imaging sciences. Since it is
typically assumed that images are governed by edge-like structures, a common model situation are
cartoon-like functions, which are -- coarsely speaking -- compactly supported piecewise smooth functions.
Shearlet systems \cite{KL12}, which might be among the most widely used directional representation
systems today, have been shown to deliver optimally sparse approximations of this class. However,
their compactly supported version, though superior due to high spatial localization, forms a (non-tight) frame
for $L^2(\R^2)$; but the construction of a dual having a closed and easily computable form is an
open problem.

\subsection{Data Processing by Frames}

Frames have a long history in providing decompositions and expansions for data processing, and the
reader might consult \cite{CK12} for applications in audio processing or communication theory.
A {\em frame} for a Hilbert space $\cH$ is a sequence $(\varphi_i)_{i \in I}$ satisfying $A \|f\|^2
\le \sum_{i \in I} |\langle f , \varphi_i \rangle |^2 \le B \|f\|^2$ for all $f \in \cH$ with
$0 < A \le B < \infty$. If  the frame bounds $A$ and $B$ can chosen to be equal, it is typically
called {\em tight frame}; in case of $A=B=1$, a {\em Parseval frame}.

Analysis of an element $f \in \cH$ by a frame $(\varphi_i)_{i \in I}$ is typically achieved by application
of the {\em analysis operator} $T$ given by
\[
T : \cH \to \ell^2(I), \quad f \mapsto (\langle f,\varphi_i \rangle)_{i \in I}.
\]
Reconstruction of $f$ from the sequence of {\em frame coefficients} $(\langle f,\varphi_i \rangle)_{i \in I}$
is possible by utilizing the adjoint operator $T^*$, since it can be shown that
\beq \label{eq:reconstructionformula}
f = \sum_{i \in I} \ip{f}{\varphi_i} (T^*T)^{-1}\varphi_i \quad \mbox{for all } f \in \cH.
\eeq
Unless $(\varphi_i)_{i \in I}$ forms a tight frame -- in which case $T^*T$ is a multiple of the identity --,
we face the difficulty to have to invert the operator $T^*T$ in order to compute the {\em canonical dual frame}
$((T^*T)^{-1}\varphi_i)_{i \in I}$.

In fact, certainly, the canonical dual frame is not the only choice for deriving a reconstruction
formula such as \eqref{eq:reconstructionformula}. In general, one calls $(\tilde{\varphi}_i)_{i \in I}$
an associated {\em dual frame}, if the following is true:
\beq \label{eq:expansion_true}
f = \sum_{i \in I} \ip{f}{\varphi_i} \tilde{\varphi}_i \quad \mbox{for all } f \in \cH.
\eeq

\subsection{Sparse Approximation using Frames}

One key feature of frames, which is in particular beneficial for deriving sparse approximations, is their
redundancy. Sparse approximation by a frame $(\varphi_i)_{i \in I}$ can be regarded from two sides: On the
one hand, we might consider expansions in terms of the frame such as
\beq \label{eq:expansion}
f = \sum_{i \in I} c_i \varphi_i,
\eeq
expecting the existence of some coefficient sequence $(c_i)_{i \in I}$, which is sparse in the sense of,
for instance, $\|(c_i)_{i \in I}\|_{\ell^1(I)} < \infty$ or at least $\|(c_i)_{i \in I}\|_{\ell^p(I)} < \infty$
for some $p < 2$.

This is however not the approach normally taken in data science, in particular, related to compressed sensing.
Instead we expect that the sequence of frame coefficients $(\langle f,\varphi_i \rangle)_{i \in I}$ is sparse.
In \cite{NDEG12}, this situation is termed {\em co-sparsity}, and in fact sparsity within a frame is typically
exploited in this way. For instance, reconstruction from highly undersampled data is then achieved by placing
the $\ell_1$-norm on such coefficient sequences and mimimizing over all $f \in \cH$.

Thus, instead of expansions of the form \eqref{eq:expansion}, we consider \eqref{eq:expansion_true} in the sense
of a reconstruction procedure. This certainly requires having access to some dual frame associated with
$(\varphi_i)_{i \in I}$. One can circumvent this problem by using iterative methods such as conjugate
gradients whose efficiency depends heavily on the ratio of the frame bounds. But such methods deliver
only approximate solutions and are rather slow.

\subsection{Imaging Science and Anisotropic Features}

Images play a key part in data science as a significant percentage of data today are in fact images. Following
the program discussed before, it is illusory to assume that reasonable results can be derived for the whole
Hilbert space $L^2(\R^2)$. Hence we restrict to an appropriate subset, which models features images are
assumed to be governed by.

As such a class typically so-called {\em cartoon-like functions} introduced in \cite{Don01} are taken,
which are basically compactly supported functions that are $C^2$ apart from a closed $C^2$ discontinuity
curve with bounded curvature (Definition \ref{defi:cartoon}). The intuition is that edge-like structures
are typically prominent in images and, in addition, the neurons in the visual cortex of humans also react very
strongly to those features. It should be emphasized that certainly such structures appear in
other situations as well such as in solutions of transport dominated equations \cite{DHSW12,DKLSW15}.

Donoho then proved in \cite{Don01} that the $L^2$-error of best $N$-term approximation $f_N$ of such a
cartoon-like function $f$ by any frame for $L^2(\R^2)$ behaves as
\[
\|f-f_N\|_2 \gtrsim N^{-1}  \qquad \text{as } N \rightarrow \infty.
\]
This results provides a notion of optimality, and frames satisfying this approximation rate up to a
$\log$-factor are customarily referred to as systems delivering {\em optimal sparse approximations}
within the class of cartoon-like functions.

\subsection{Shearlet Systems} \label{subsec:shearlets}

Shearlet systems were originally introduced in \cite{GKL06} as a directional representation system which
meets this benchmark result, but which -- in contrast to the previously advocated system of curvelets
\cite{CD04} -- fit into the framework of affine systems and also allow a faithful implementation by a unified
treatment of the continuum and digital realm.

Shearlet systems are based on three operations: {\em parabolic scaling} $A_j$, $j \in \Z$ to provide different
resolution levels, {\em shearing} $S_k$, $k \in \Z$ to provide different orientations, both given by
\[
A_j = \left(
  \begin{array}{cc}
    2^j & 0 \\
    0 & \WQ{2^{\lfloor\frac{j}{2}\rfloor}} \\
  \end{array}
\right)
\quad \text{and} \quad
S_k = \left(
  \begin{array}{cc}
    1 & k \\
    0 & 1 \\
  \end{array}
\right),
\]
as well as {\em translation} to provide different positions. The definition of a (cone-adapted) shearlet system is
then as follows. We wish to mention that the term ``cone-adapted'' is due to the fact that the different systems
$\Psi(\psi; c)$ and $\tilde{\Psi}(\tilde{\psi}; c)$ are responsible for the horizontal and vertical cone in Fourier domain, respectively;
thereby, together with $\Phi(\phi; c_1)$ achieving a complete system with a finite set of shears
for each sale $j$.

\begin{definition}
For $\phi, \psi, \tilde{\psi} \in L^2(\R^2)$ and $c=(c_1,c_2) \in (\R_+)^2$, the \emph{(cone-adapted) shearlet system}
$\mathcal{SH}(\phi,\psi,\tilde{\psi}; c)$ is defined by
\[
\mathcal{SH}(\phi,\psi,\tilde{\psi}; c) = \Phi(\phi; c_1) \cup \Psi(\psi; c) \cup \tilde{\Psi}(\tilde{\psi}; c),
\]
where
\begin{align*}
  \Phi(\phi; c_1) &= \{\phi_m := \phi(\cdot- c_1 m) : m \in \Z^2\},\\
  \Psi(\psi; c) &= \{\psi_{j,k,m} := \WQ{|\mathrm{det}(A_j)|^{1/2}} \psi(S_k A_{j}\cdot  - \mathrm{diag}(c_1,c_2) m) : j \ge 0,  |k| \leq \WQ{2^{\lceil j/2 \rceil}}, m \in \Z^2\},\\
  \tilde{\Psi}(\tilde{\psi}; c) &= \{\tilde{\psi}_{j,k,m} :=  \WQ{|\mathrm{det}(\tilde{A}_j)|^{1/2}} \tilde{\psi}(S_k^T\tilde{A}_{j}\cdot  -\mathrm{diag}(c_2,c_1) m): j \ge 0,
 |k| \leq \WQ{2^{\lceil j/2 \rceil}}, m \in \Z^2 \},
\end{align*}
with $\tilde{A}_{j} = \mathrm{diag}(2^{\WQ{\lfloor j/2 \rfloor}},2^j)$.
\end{definition}

\subsection{Problems with Shearlet Frames}

For high spatial localization, compactly supported shearlet systems are considered, which are also implemented
in ShearLab (see \url{www.ShearLab.org}) \cite{KLR14}. As shown in \cite{KKL12}, compactly supported generators
$\phi, \psi, \tilde{\psi} \in L^2(\R^2)$ can be constructed such that the associated shearlet system
$\mathcal{SH}(\phi,\psi,\tilde{\psi}; c)$ constitutes a frame -- not a tight frame -- for $L^2(\R^2)$ with
controllable frame bounds dependent on $c$. Under slightly stronger conditions, it was proven in \cite{KL11} that
such systems also deliver optimally sparse approximations of cartoon-like functions.

In the situation of bandlimited shearlet frames (i.e., the Fourier transform is compactly supported), Grohs derived
results on the existence of ``nice'' duals \cite{Gro13}. However, for compactly supported shearlet frames
no closed, easily computable form of an associated dual frame is known, even when allowing small modifications of
the shearlet system.

\subsection{Our Contribution}

In this paper, we present a solution to this problem. We construct a shearlet system which can be regarded as
being of the form $\mathcal{SH}(\phi,\psi,\tilde{\psi}; c)$ and satisfies the following properties:
\bitem
\item The system is compactly supported and forms a frame for $L^2(\R^2)$.
\item The system delivers optimal sparse approximations of cartoon-like functions.
\item An associated dual frame can be stated in closed form and efficiently computed.
\item It is composed of orthonormal bases, which provides it with a distinct, accessible structure.
\eitem

In addition, the novel proof technique which we use for proving the approximation properties of dualizable shearlet
frames along the way allow an improvement of previous approximation results from \cite{KL11} for the class of compactly
supported shearlet frames introduced in \cite{KKL12} with respect to the exponent of the additional $\log$-term (see
Corollary \ref{cor:approx}). It should be mentioned that with this result, this exponent in the $\log$-term is the
smallest known for any directional representation system, in particular, curvelets \cite{CD04}.

\subsection{Outline}

The paper is organized as follows. The construction of dualizable shearlet systems is presented and discussed in
Section \ref{sec:construction}; the definition is stated in Definition \ref{def:dualizableshearlets}. Section
\ref{sec:dual} is devoted to the analysis of frame properties of dualizable shearlet systems, namely showing (in
Theorem \ref{theo:dualframe}) that these systems do form frames for $L^2(\R^2)$ and that an associated dual frame
can be explicitly given in closed form. The statement that dualizable shearlet systems do provide optimally sparse
approximations of cartoon-like functions is presented in Section \ref{sec:sparse} as Theorem \ref{thm:sparsity}.
Since the proof is very involved, the key steps and the core part are presented in Section \ref{sec:proofMainTheorem}
with the proofs of several preliminary lemmata being outsourced to Section \ref{sec:proofs}.

\section{Construction of Dualizable Shearlet Frames}\label{sec:construction}

This section is devoted to the construction of dualizable shearlet frames. One key ingredient is a family
of orthonormal bases for each shearing direction, which will be discussed in Subsection \ref{subsec:basic}.
Since those elements lack directionality in the sense of wedgelike shape elements, they are subsequently
filtered, yielding the desired dualizable shearlet systems (see Subsection \ref{subsec:definition}). We
emphasize that we will only present the construction for the horizontal Fourier cone in detail -- compare
the cone-based definition of shearlet systems from Subsection \ref{subsec:shearlets} --; the vertically
aligned system will be derived by switching the variables, or rotation by 90 degrees denoted by $R_{\frac{\pi}{2}}$.

Since the construction is rather technical in nature, it is not initially clear that the term ``shearlets'' is
justified; and we argue in Subsection \ref{subsec:comparison} in favor of this expression by comparing the novel
systems to customarily defined shearlets (cf. Subsection \ref{subsec:shearlets}).

\subsection{Basic Ingredients}\label{subsec:basic}

To construct a family of orthonormal bases for $L^2(\R^2)$ for each shearing direction, we first let
$\varphi_1, \psi_1 \in L^2(\R)$ be compactly supported functions, which satisfy the support condition
\beq \label{eq:support_varphi_1}
\delta_{\varphi_1} = \inf_{\xi \in [-\frac12,\frac12]} |\hat\varphi_1(\xi)| > 0,
\eeq
as well as, for some $\rho \in (0,\frac{2}{13})$, $\alpha \ge \frac{6}{\rho}+1$, and $\beta > \alpha + 1$, the decay conditions
\beq \label{eq:decay_varphi_1}
\Bigl|\Bigl(\frac{d}{d\xi}\Bigr)^{\ell}\hat\psi_1(\xi)\Bigr| \lesssim \frac{\min\{1,|\xi|^{\alpha}\}}{(1+|\xi|)^{\beta}}
\quad \mbox{and} \quad
\Bigl|\Bigl(\frac{d}{d\xi}\Bigr)^{\ell}\hat\varphi_1(\xi)\Bigr| \lesssim \frac{1}{(1+|\xi|)^{\beta}}
\quad \mbox{for } \ell=0,1.
\eeq
In addition, we require the system
\[
\{\varphi_1(\cdot-m) : m \in \Z\} \cup \{2^{j/2}\psi_1(2^j\cdot-m) : j \ge 0, m \in \Z\}
\]
to form an orthonormal basis for $L^2(\R)$. For the existence of such functions, we refer to \cite{Dau92}.

Second, we utilize this univariate system to construct the desired family of orthonormal bases. We now follow
the following strategy: We lift the system to $L^2(\R^2)$ in such a way that we achieve a tiling of Fourier
domain according to Figure \ref{fig:ortho_tile}(a), and then apply shearing operators.

To generate the anticipated tiling, for $x = (x_1,x_2) \in \R^2$, we set
\[
\psi^0(x) := \psi_1(x_1)\varphi_1(x_2) \quad \mbox{and} \quad
\psi^{p}(x) := 2^{(p-1)/2}\psi_1(x_1)\psi_1(2^{p-1}x_2) \; \mbox{for } p > 0
\]
as well as
\[
\varphi^0(x) := \varphi_1(x_1)\varphi_1(x_2) \quad \mbox{and} \quad
\varphi^{p}(x) := 2^{(p-1)/2}\varphi_1(x_1)\psi_1(2^{p-1}x_2) \; \mbox{for } p > 0.
\]
Notice that the parameter $p$ will be utilized to derive the dyadic substructure in vertical direction.

For a fixed integer $j_0 \ge 0$, we next consider the system given by
\[
\{|\mathrm{det}(A_{j_0})|^{1/2}\varphi^p(A_{j_0} \cdot-D_pm), \:
|\mathrm{det}(A_j)|^{1/2}\psi^p(A_j \cdot-D_pm) : j \ge j_0, p \ge 0\},
\]
where $D_p = \mathrm{diag}(1,2^{-\max\{p-1,0\}})$, which achieves the tiling of Fourier domain as depicted in Figure \ref{fig:ortho_tile}(a). It will be
shown in Lemma \ref{lemm:ONB} that this system forms an orthonormal basis for $L^2(\R^2)$.

We now carefully insert the shearing operator. Drawing from the definition of ``normal'' shearlets,
the shear parameter should equal $\WQ{\frac{k}{2^{\lceil j/2 \rceil}}}$ with \WQ{$|k| \leq 2^{\lceil j/2\rceil }$}. Since we later need to
parameterize by those quotients, we require a unique representation without ambiguity. For this,
we define the map
\[
s : \{(j,q) : j = 0, q = 0\} \cup \{j : j \ge 0\} \times \{q : \WQ{|q| \leq 2^{j}}, \, q \in 2 \Z +1\} \to \WQ{[-1,1]}, \,\, s(j,q) := \WQ{\frac{q}{2^{j}}},
\]
which is obviously injective. Thus from now on, we consider the set of shear parameters given by
\[
\mathbb{S} = \{s(j,q) = 0 : j = 0, q = 0\} \cup \{s(j,q) : j \ge 0,\, \WQ{|q| \leq 2^{j}}, \, q \in 2 \Z +1\}.
\]

Armed with definition, we can now define what we call shearlet-type wavelet systems.

\begin{definition} \label{def:shearlettypewavelets}
Let $\varphi_1, \psi_1 \in L^2(\R)$, and let $\varphi^p, \psi^p \in L^2(\R^2)$, $p \ge 0$ be defined as before.
Further, set $D_p := \mathrm{diag}(1,2^{-d_p})$ with $d_p := \max\{p-1,0\}$. Then, for each shear parameter \WQ{$s := s(\lceil j_0/2\rceil,q_0) \in \mathbb{S}$, where $j_0$ is the smallest nonnegative integer such that $s = \frac{q_0}{2^{\lceil j_0/2\rceil}}$}, we define
the {\em shearlet-type wavelet system} $\Psi_s(\varphi_1, \psi_1)$ by
\[
\Psi_s(\varphi_1, \psi_1) := \{\varphi_{j_0,s,m,p}, \psi_{j,s,m,p} : j \ge j_0, m \in \Z^2, p \ge 0\},
\]
where
\[
\varphi_{j,s,m,p} := |\mathrm{det}(A_j)|^{1/2}\varphi^p(A_jS_s\cdot-D_p m) \;\: \mbox{ and } \;\: \psi_{j,s,m,p} := |\mathrm{det}(A_j)|^{1/2}\psi^p(A_jS_s\cdot-D_p m).
\]
\end{definition}

The tiling of Fourier domain by shearlet-type wavelet systems is depicted in Figure \ref{fig:ortho_tile}.
\begin{figure}[htb]
\begin{center}
\hspace*{\fill}
\includegraphics[width=0.4\textwidth]{./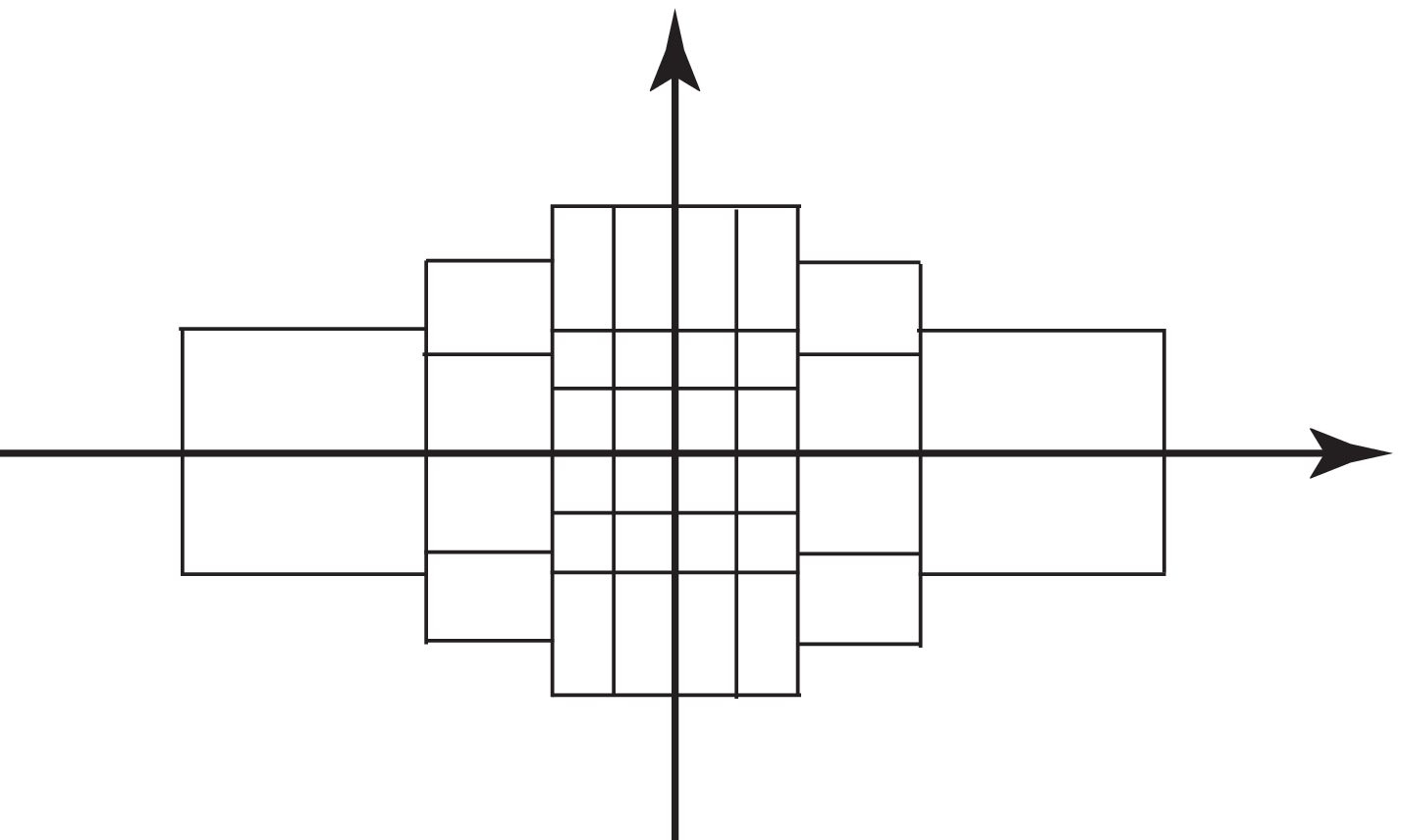}
\hfill
\includegraphics[width=0.4\textwidth]{./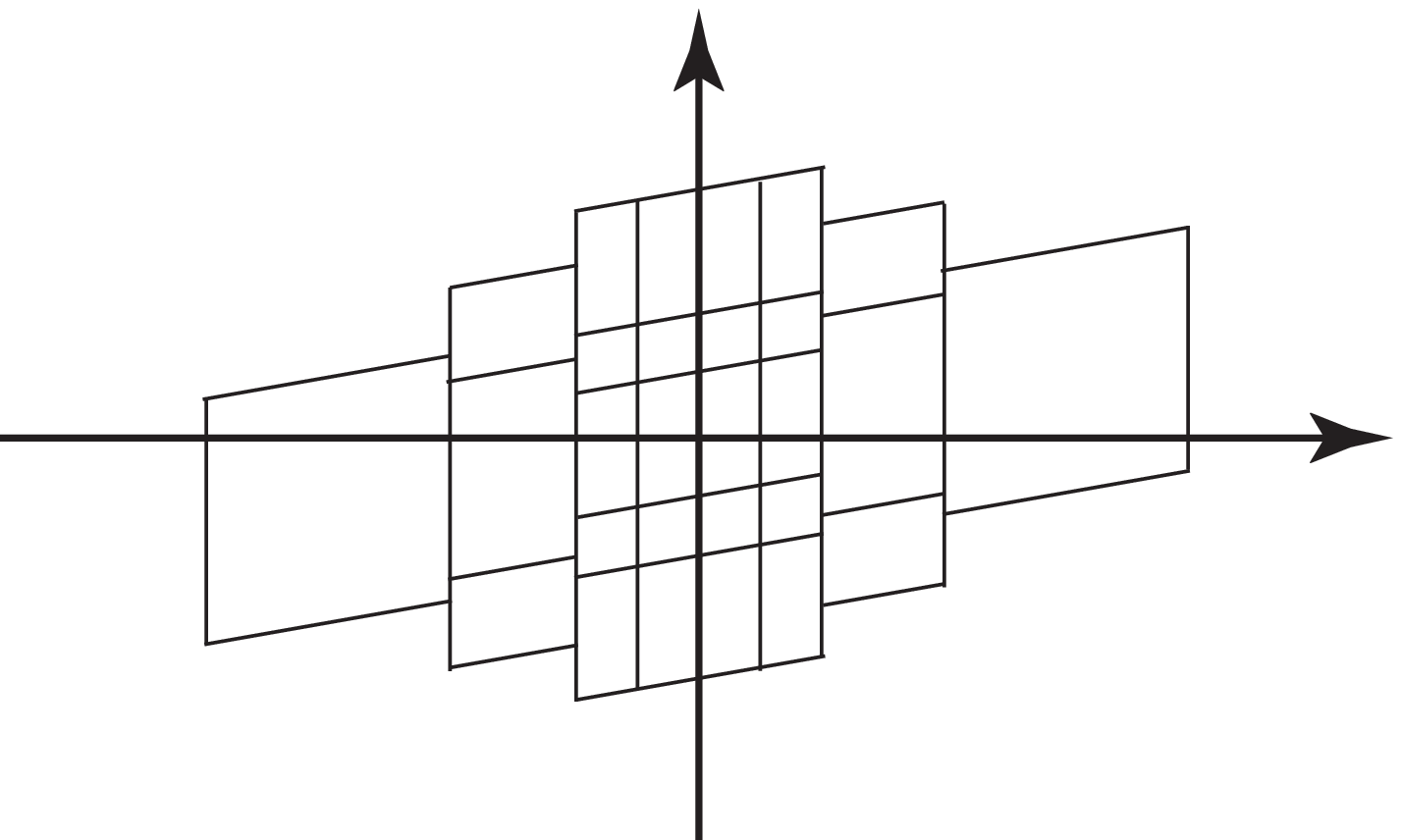}
\hspace*{\fill}
\put(-354,-15){{\scriptsize{(a)}}}
\put(-130,-15){{\scriptsize{(b)}}}
\end{center}
\caption{Tiling of Fourier domain by $\Psi_s$ with (a) $s = 0$ and (b) $s = \frac14$.}
\label{fig:ortho_tile}
\end{figure}

The next result shows that, for each shear parameter, the associated shearlet-type wavelet system indeed
constitutes an orthonormal basis.

\begin{lemma}\label{lemm:ONB}
For each $s \in \mathbb{S}$, the shearlet-type wavelet system $\Psi_s(\varphi_1, \psi_1)$ is
an orthonormal basis for $L^2(\R^2)$.
\end{lemma}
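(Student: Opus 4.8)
The plan is to reduce the claim for a general shear parameter $s$ to the unsheared case, and then to exploit the tensor-product multiresolution structure inherited from the univariate wavelet orthonormal basis. First I would observe that shearing acts \emph{unitarily}: since $\mathrm{det}(S_s) = 1$, the operator $\sigma_s : L^2(\R^2) \to L^2(\R^2)$ defined by $(\sigma_s g)(x) := g(S_s x)$ is unitary, and by the very definition of $\Psi_s(\varphi_1,\psi_1)$ each of its generators is $\sigma_s$ applied to the corresponding generator of the unsheared system
\[
\{|\mathrm{det}(A_{j_0})|^{1/2}\varphi^p(A_{j_0}\cdot - D_p m),\; |\mathrm{det}(A_j)|^{1/2}\psi^p(A_j \cdot - D_p m) : j \ge j_0,\, p \ge 0,\, m \in \Z^2\}.
\]
Because a unitary operator carries an orthonormal basis to an orthonormal basis, it suffices to prove that, for each fixed $j_0 \ge 0$, this unsheared system is an orthonormal basis for $L^2(\R^2)$.

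The core step is a bookkeeping computation showing that every element of the unsheared system is an honest tensor product of two elements of the univariate wavelet orthonormal basis. Writing $W_j := \overline{\mathrm{span}}\{2^{j/2}\psi_1(2^j\cdot - m) : m \in \Z\}$ and $V_J := \overline{\mathrm{span}}\{2^{J/2}\varphi_1(2^J\cdot - m) : m \in \Z\}$, the hypothesis that the univariate system is an orthonormal basis yields the standard multiresolution identity $L^2(\R) = V_J \oplus \bigoplus_{j \ge J} W_j$ for every $J \ge 0$, with all summands mutually orthogonal. Inserting the definitions of $A_j = \mathrm{diag}(2^j, 2^{\lfloor j/2\rfloor})$, $D_p = \mathrm{diag}(1, 2^{-\max\{p-1,0\}})$, and of $\varphi^p,\psi^p$, and using that the factor $2^{-(p-1)}$ in $D_p$ exactly cancels the internal dilation $2^{p-1}$ so that integer translations are preserved, I would verify for $x=(x_1,x_2)$ and $m=(m_1,m_2)$ that
\begin{align*}
|\mathrm{det}(A_j)|^{1/2}\psi^0(A_j x - D_p m) &= \bigl[2^{j/2}\psi_1(2^j x_1 - m_1)\bigr]\bigl[2^{\lfloor j/2\rfloor/2}\varphi_1(2^{\lfloor j/2\rfloor} x_2 - m_2)\bigr], \\
|\mathrm{det}(A_j)|^{1/2}\psi^p(A_j x - D_p m) &= \bigl[2^{j/2}\psi_1(2^j x_1 - m_1)\bigr]\bigl[2^{j_2/2}\psi_1(2^{j_2} x_2 - m_2)\bigr] \quad (p>0),
\end{align*}
with $j_2 := \lfloor j/2\rfloor + p - 1$, together with the analogous factorizations of the $\varphi^p$ generators in which the first factor is instead the scaling element $2^{j_0/2}\varphi_1(2^{j_0} x_1 - m_1)$.

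From these factorizations the result follows from the tensor-product structure. For a fixed first-coordinate scale $j$, letting $p$ and $m_2$ range shows the second-coordinate factors run exactly once through an orthonormal basis of $V_{\lfloor j/2\rfloor} \oplus \bigoplus_{j' \ge \lfloor j/2\rfloor} W_{j'} = L^2(\R)$, the $p=0$ term supplying the coarse space and the $p>0$ terms the finer wavelet spaces, with no gap and no overlap. Hence the $\psi$-part is an orthonormal basis of $\bigl(\bigoplus_{j\ge j_0} W_j\bigr)\otimes L^2(\R) = (L^2(\R)\ominus V_{j_0})\otimes L^2(\R)$, while the identical argument applied at the single scale $j_0$ shows the $\varphi$-part is an orthonormal basis of $V_{j_0}\otimes L^2(\R)$. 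Since these two subspaces are orthogonal complements summing to $L^2(\R^2)$, their union is an orthonormal basis, which by the unitarity reduction of the first paragraph proves the claim for every $s \in \mathbb{S}$.

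I expect the main obstacle to be precisely the index bookkeeping in the factorization step: one must check that the anisotropic offset $\lfloor j/2\rfloor$ introduced in the second coordinate by $A_j$ is exactly the resolution at which the second-coordinate family begins, so that over all $p \ge 0$ the pieces tile $L^2(\R)$ without redundancy. Confirming that $D_p$ restores integer translations and that the normalization constants combine to the correct power $2^{j_2/2}$ is routine but must be carried out carefully, since the orthonormality and completeness of the entire system hinge on it.
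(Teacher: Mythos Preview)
Your proposal is correct and follows essentially the same route as the paper: reduce to the unsheared system by unitarity of $\sigma_s$, write each generator as a pure tensor of univariate wavelet/scaling elements, and invoke the multiresolution decomposition $L^2(\R)=V_J\oplus\bigoplus_{j\ge J}W_j$ in each coordinate. Your treatment is in fact slightly more careful than the paper's: you make the unitary reduction explicit (the paper simply says ``without loss of generality $s=0$, hence $j_0=0$'') and you correctly record the second-coordinate subspace as $V_{\lfloor j/2\rfloor}$ rather than $V_0$, which matters when $j>0$ even though the final identity $W_j\otimes L^2(\R)$ is unaffected.
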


\begin{proof}
Without loss of generality, we consider $\Psi_{0}(\varphi_1,\psi_1)$ where $s = 0$ (and hence $j_0 = 0$) in Definition \ref{def:shearlettypewavelets}.
Then, for $x = (x_1,x_2) \in \R^2$, by definition,
\[
\varphi_{0,0,m,0}(x) = \varphi_1(x_1-m_1)\varphi_1(x_2-m_2) \; \mbox{ and } \; \varphi_{0,0,m,p}(x) = \varphi_1(x_1-m_1)2^{\frac{d_p}{2}}\psi_1(2^{d_p}x_2-m_2)
\]
as well as, in addition for $j \ge 0$ and $p > 0$,
\[
\psi_{j,0,m,0}(x) = 2^{\frac{j}{2}}\psi_1(2^jx_1-m_1)\WQ{2^{\frac12\lfloor \frac{j}{2} \rfloor}\varphi_1(2^{\lfloor \frac{j}{2} \rfloor}x_2-m_2)}
\]
and
\[
\psi_{j,0,m,p}(x) = \WQ{2^{\frac{j}{2}}\psi_1(2^jx_1-m_1)2^{\frac{d_p}{2}+\frac{1}{2}\lfloor \frac{j}{2}\rfloor}\psi_1(2^{d_p}(2^{\lfloor\frac{j}{2}\rfloor}x_2)-m_2)}.
\]
Next, for each $j \ge 0$, let $V_j$ and $W_j$ be the subspaces of $L^2(\R^2)$ defined by
\[
V_j = \overline{\mbox{span}\{2^{\frac{j}{2}}\varphi_1(2^j\cdot-m) : m \in \Z\}}
\quad \mbox{and} \quad
W_j = \overline{\mbox{span}\{2^{\frac{j}{2}}\psi_1(2^j\cdot-m) : m \in \Z\}}.
\]
By construction, for each $p > 0$, the systems $\{\varphi_{0,0,m,0} : m \in \Z^2\}$ and
$\{\varphi_{0,0,m,p} : m \in \Z^2\}$ form orthonormal bases for $V_0 \otimes V_0$ and $V_0 \otimes W_{d_p}$, respectively.
Similarly, for $j \ge 0$ and $p > 0$, $\{\psi_{j,0,m,0} : m \in \Z^2\}$ and $\{\psi_{j,0,m,p} : m \in \Z^2\}$ form
orthonormal bases for $W_j \otimes V_0$ and $W_j \otimes W_{d_p}$, respectively.

Since $V_0 \bot W_j$ and $W_j \bot W_{j'}$ for $j, j' \ge 0$, $j \neq j' $, those subspaces are mutually orthogonal and,
for each $j \ge 0$,
\[
\Bigl( V_0 \otimes V_0 \Bigr) \oplus \Bigl( \bigoplus_{p = 1}^{\infty} V_0 \otimes W_{d_p} \Bigr) \oplus \Bigl( W_j \otimes V_0 \Bigr)
\oplus \Bigl( \bigoplus_{p = 1}^{\infty} W_j \otimes W_{d_p}\Bigr) = (V_0 \otimes L^2(\R)) \oplus (W_j \otimes L^2(\R)).
\]
From this, we finally obtain
\[
\Bigl(V_0 \otimes L^2(\R)\Bigr) \oplus \Bigl(\bigoplus_{j = 0}^{\infty} W_j \otimes L^2(\R)\Bigr) = L^2(\R^2).
\]
This proves our claim.
\end{proof}

We wish to mention that the definition of a dualizable shearlet system in Definition \ref{def:dualizableshearlets}
will also require the systems derived by switching the variable in $\Psi_s(\varphi_1, \psi_1)$, i.e., by rotation by $R_{\frac{\pi}{2}}$.

\subsection{Definition}\label{subsec:definition}

The next step consists of a filtering procedure. To define the filters, let $g \in L^2(\R^2)$ be a
compactly supported function satisfying the conic support condition
\beq \label{eq:support_g}
\delta_g = \inf_{\xi \in \Omega_{g}} |\hat g(\xi)| > 0, \quad \mbox{where } \Omega_{g} = \{\xi \in \R^2 : |\tfrac{\xi_2}{\xi_1}| < 1, \tfrac12 < |\xi_1| < 1\Big\},
\eeq
as well as the decay condition
\beq \label{eq:decay_g}
\Bigl|\Bigl(\frac{\partial}{\partial \xi_2}\Bigr)^{\ell}\hat g(\xi)\Bigr| \lesssim \frac{\min\{1,|\xi_1|^{\alpha}\}}{(1+|\xi_1|)^{\beta}(1+|\xi_2|)^{\beta}}
\quad \mbox{for } \ell=0,1,
\eeq
with $\alpha$ and $\beta$ chosen as before (i.e., $\rho \in (0,\frac{2}{13})$, $\alpha \ge \frac{6}{\rho}+1$, and $\beta > \alpha + 1$).

At this point, we pause in the description of the construction, and first observe the following frame-type
equation, which follows from our choices. Notice that this result already combines systems for the horizontal
and vertical cones. For the proof, we refer to \cite{KKL12}.

\begin{lemma}[\cite{KKL12}]
\label{lemm:framePhiPsi}
Letting $\varphi_1, \psi_1 \in L^2(\R)$ and $g \in L^2(\R^2)$ be defined as before, we have
\[
0 < A \leq |\hat \varphi^0(\xi)|^2 + \sum_{j \ge 0}\sum_{\WQ{|k| \leq 2^{\lceil j/2 \rceil}}} |\hat g(S^{-T}_kA^{-1}_j \xi)|^2
+ |\hat{g}( S^{-T}_k {A}^{-1}_j R_{\frac{\pi}{2}}\xi)|^2 \leq  B < \infty \;\: \mbox{for a.e. } \xi \in \R^2,
\]
where $\varphi^0(x) = \varphi_1(x_1)\varphi_1(x_2)$.
\end{lemma}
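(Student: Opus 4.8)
The plan is to establish the lower and upper bounds separately, viewing the displayed quantity as a Calder\'on-type sum in Fourier domain. For $\xi = (\xi_1,\xi_2)$ with $\xi_1 \neq 0$, the transformed point $\eta := S_k^{-T}A_j^{-1}\xi$ has first coordinate $\eta_1 = 2^{-j}\xi_1$ and slope $\eta_2/\eta_1 = t - k$, where $t := 2^{\lceil j/2\rceil}\tfrac{\xi_2}{\xi_1}$. Hence, by the conic support condition \eqref{eq:support_g}, the summand $|\hat g(\eta)|^2$ is bounded below by $\delta_g^2$ precisely when $\tfrac12 < 2^{-j}|\xi_1| < 1$ and $|t - k| < 1$, while the decay condition \eqref{eq:decay_g} controls its size outside this regime. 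The term $|\hat\varphi^0(\xi)|^2 = |\hat\varphi_1(\xi_1)|^2 |\hat\varphi_1(\xi_2)|^2$ will account for the low-frequency square, and the $R_{\frac{\pi}{2}}$-summands for the vertical cone by symmetry.

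For the lower bound I would argue by a covering of Fourier domain. If $\xi \in [-\tfrac12,\tfrac12]^2$, then \eqref{eq:support_varphi_1} gives $|\hat\varphi^0(\xi)|^2 \ge \delta_{\varphi_1}^4 > 0$. Otherwise $\xi$ lies in the horizontal cone $|\xi_2| < |\xi_1|$ (forcing $|\xi_1| > \tfrac12$) or the vertical cone $|\xi_1| < |\xi_2|$; treating the horizontal case, with the vertical one following verbatim after applying $R_{\frac{\pi}{2}}$, I would select the unique $j \ge 0$ with $\tfrac12 < 2^{-j}|\xi_1| < 1$ and let $k$ be the nearest integer to $t$. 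Since $|\tfrac{\xi_2}{\xi_1}| < 1$ gives $|t| < 2^{\lceil j/2\rceil}$, one checks $|k| \le 2^{\lceil j/2\rceil}$ and $|t - k| \le \tfrac12 < 1$, so that $S_k^{-T}A_j^{-1}\xi \in \Omega_g$ and the corresponding summand is at least $\delta_g^2$. Thus the whole expression is bounded below by $\min\{\delta_{\varphi_1}^4,\delta_g^2\} > 0$.

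For the upper bound I would use only the $\ell = 0$ part of \eqref{eq:decay_g}. Writing $u := 2^{-j}|\xi_1|$, it reads $|\hat g(S_k^{-T}A_j^{-1}\xi)|^2 \lesssim \min\{1,u^{2\alpha}\}(1+u)^{-2\beta}(1 + u|t - k|)^{-2\beta}$. Splitting the $j$-sum at $j \approx \log_2|\xi_1|$: for $u \ge 1$ the inner sum over $k$ converges uniformly since $2\beta > 1$, and $(1+u)^{-2\beta}$ forms a geometric series in $j$ dominated by its largest term, contributing $O(1)$; for $u < 1$ the factor $u^{2\alpha} = 2^{-2\alpha j}|\xi_1|^{2\alpha}$ decays geometrically while $\sum_k (1 + u|t-k|)^{-2\beta} \lesssim 1 + u^{-1}$ by integral comparison, and the product again sums geometrically because $\alpha$ is large. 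This yields a bound independent of $\xi$; the $R_{\frac{\pi}{2}}$-sum is estimated identically, and $|\hat\varphi^0(\xi)|^2 \le \|\hat\varphi_1\|_\infty^4 < \infty$, giving a finite $B$.

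The main obstacle is the upper bound's summation over the growing number of admissible shears at scales $j$ far from $\log_2|\xi_1|$: a naive count of the $\approx 2^{\lceil j/2\rceil}$ values of $k$ overestimates badly, and one must exploit the $\xi_2$-decay via the integral comparison together with the largeness of $\alpha$ (guaranteed by $\alpha \ge \tfrac{6}{\rho}+1$) to close the geometric series. A secondary point of care is the seamlessness of the lower-bound covering at the cone boundary $|\xi_1| = |\xi_2|$ and at the transition between the central square and the first dyadic shell, where the overlap built into the shear range $|k| \le 2^{\lceil j/2\rceil}$ prevents gaps. Since this is exactly the frame-type estimate of \cite{KKL12}, I would ultimately follow that reference for the remaining routine computations.
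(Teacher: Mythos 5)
The paper does not actually prove this lemma --- it is imported verbatim from \cite{KKL12} (``For the proof, we refer to \cite{KKL12}'') --- so there is no in-paper argument to compare against; your sketch is the standard direct verification and is correct: the covering argument for the lower bound (central square via \eqref{eq:support_varphi_1}, then the unique dyadic shell $\tfrac12<2^{-j}|\xi_1|<1$ and nearest-integer shear so that $S_k^{-T}A_j^{-1}\xi\in\Omega_g$) is precisely the computation the paper itself repeats inside the proof of Theorem \ref{theo:dualframe}, and your upper bound via $\sum_k(1+u|t-k|)^{-2\beta}\lesssim 1+u^{-1}$ plus geometric summation in $j$ is the usual Calder\'on-sum estimate. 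The only cosmetic remark is that the largeness of $\alpha$ (i.e.\ $\alpha\ge\tfrac{6}{\rho}+1$) is not actually needed to close the geometric series --- $2\alpha>1$ and $2\beta>1$ suffice --- so the upper bound is less delicate than your final paragraph suggests.
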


The filters $G_s$, $s = \WQ{s(\lceil j_0 /2\rceil,q_0)} \in \mathbb{S}$   are then defined by
\beq \label{eq:defiGs}
\hat G_{0}(\xi) = |\hat \varphi^0(\xi)|^2+\sum_{j = 0}^{\infty} |\hat g(A^{-1}_jS^{-T}_s \xi)|^2
\quad \mbox{and} \quad
\hat G_{s}(\xi) = \sum_{j = j_0}^{\infty} |\hat g(A^{-1}_jS^{-T}_s \xi)|^2 \quad \mbox{for } s \neq 0.
\eeq
Figure \ref{fig:shear_tree} illustrates the frequency tiling by the essential supports of $\hat G_s$, showing the
wedgelike shape geometry.
\begin{figure}[htb]
\begin{center}
\includegraphics[width=0.3\textwidth]{./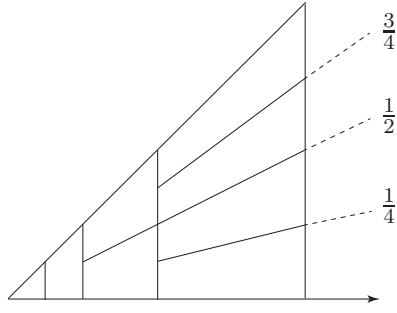}
\put(0,34){{\small{$\frac14$}}}
\put(0,68){{\small{$\frac12$}}}
\put(0,100){{\small{$\frac34$}}}
\end{center}
\caption{Frequency tiling with $\hat G_s$ for $s = 0,\frac14,\frac12,\frac34$ (for $\xi_1 > 0$).}
\label{fig:shear_tree}
\end{figure}

The following result provides an identity, which will be a key ingredient to prove the frame property of
the dualizable shearlet systems in Theorem \ref{theo:dualframe}.

\begin{lemma} \label{lemm:filterequation}
Let $G_s$, $s \in \mathbb{S}$ be defined as in \eqref{eq:defiGs}. Then
\[
\sum_{s \in \mathbb{S}} \hat G_s(\xi) = |\hat \varphi^0(\xi)|^2 + \sum_{j \ge 0}\sum_{|k| \WQ{\leq 2^{\lceil j/2 \rceil} }} |\hat g(S_k^{-T}A^{-1}_j\xi)|^2
\quad \mbox{for a.e. } \xi \in \R^2.
\]
\end{lemma}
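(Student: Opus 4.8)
The plan is to prove the identity by a reindexing argument: both sides consist of the single summand $|\hat\varphi^0(\xi)|^2$ together with terms of the form $|\hat g(M\xi)|^2$ for suitable matrices $M$, and I will exhibit an explicit bijection between the index set of the right-hand side and the union over $s$ of the index sets defining the filters $\hat G_s$ in \eqref{eq:defiGs}. No analytic estimate is involved; the equality is exact, term by term.

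First I would record the elementary matrix identity that makes the two parametrizations of the shears compatible. A direct computation with $A_j = \mathrm{diag}(2^j,2^{\lfloor j/2\rfloor})$ shows that $S_k A_j = A_j S_s$ holds precisely when $s = k\,2^{-\lceil j/2\rceil}$, using $j-\lfloor j/2\rfloor = \lceil j/2\rceil$. Taking inverse transposes then gives $S_k^{-T}A_j^{-1} = A_j^{-1}S_s^{-T}$, so each summand $|\hat g(S_k^{-T}A_j^{-1}\xi)|^2$ on the right-hand side coincides with $|\hat g(A_j^{-1}S_s^{-T}\xi)|^2$, which is exactly the shape of the summands appearing in the definition of $\hat G_s$. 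This reduces the lemma to a purely combinatorial statement about indices.

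Next I would set up the bijection. Unfolding \eqref{eq:defiGs}, the left-hand side $\sum_{s\in\mathbb{S}}\hat G_s(\xi)$ equals $|\hat\varphi^0(\xi)|^2 + \sum_{s\in\mathbb{S}}\sum_{j\ge j_0(s)}|\hat g(A_j^{-1}S_s^{-T}\xi)|^2$, where $j_0(s)$ is the smallest nonnegative integer attached to $s$ in Definition \ref{def:shearlettypewavelets} and the term $|\hat\varphi^0|^2$ is contributed once, by $\hat G_0$. I claim the map $(s,j)\mapsto(j,k(j,s))$ with $k(j,s):=s\,2^{\lceil j/2\rceil}$ is a bijection from $\{(s,j):s\in\mathbb{S},\,j\ge j_0(s)\}$ onto $\{(j,k):j\ge 0,\,|k|\le 2^{\lceil j/2\rceil}\}$. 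Writing $s=q_0/2^{m}$ with $q_0$ odd and $m=\lceil j_0/2\rceil$ (the case $s=0$ handled separately, producing the $k=0$ column and the $|\hat\varphi^0|^2$ term), one checks that $k(j,s)=q_0\,2^{\lceil j/2\rceil - m}$ is an integer since $j\ge j_0$ forces $\lceil j/2\rceil\ge m$, and that $|k(j,s)|\le 2^{\lceil j/2\rceil}$ because $|q_0|\le 2^{m}$. Conversely, given an admissible $(j,k)$ with $k=2^{t}q_0$ and $q_0$ odd, the value $s=k/2^{\lceil j/2\rceil}=q_0/2^{\lceil j/2\rceil - t}$ lies in $\mathbb{S}$ and satisfies $j\ge j_0(s)$, the latter because $m=\lceil j/2\rceil - t$ yields $2m-1\le j$. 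Injectivity is immediate, since $k(j,s)=s\,2^{\lceil j/2\rceil}$ recovers $s$ from $(j,k)$. Transporting the double sum along this bijection and invoking the matrix identity from the previous step yields the claimed equality.

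The main obstacle is precisely the index bookkeeping in this last step: verifying that the range $j\ge j_0(s)$ in the definition of $\hat G_s$ matches exactly the admissible range $|k|\le 2^{\lceil j/2\rceil}$ on the right-hand side. The delicate point is the interplay between $j$ and $\lceil j/2\rceil$. The canonical index is characterized by $\lceil j_0/2\rceil = m$, so $j_0=\max\{2m-1,0\}$, and one must confirm both the equivalence $\lceil j/2\rceil\ge m \Leftrightarrow j\ge 2m-1$ and that the extreme shears $s=\pm 1$ (the case $m=0$) correspond to the boundary columns $|k|=2^{\lceil j/2\rceil}$. Once this ceiling arithmetic is settled, the reindexing is exact and the identity follows for a.e.\ $\xi$.
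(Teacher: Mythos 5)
Your proof is correct and is exactly the argument the paper has in mind: the paper dismisses this lemma with the single line that it ``follows directly from the definition of the filters and the set $\mathbb{S}$,'' and your reindexing bijection $(s,j)\leftrightarrow(j,k)$ together with the identity $S_k^{-T}A_j^{-1}=A_j^{-1}S_s^{-T}$ for $s=k\,2^{-\lceil j/2\rceil}$ is precisely the definitional unfolding being alluded to. Your careful verification of the ceiling arithmetic ($j\ge j_0(s)\Leftrightarrow\lceil j/2\rceil\ge m$, with the extreme shears $s=\pm1$ hitting the boundary columns $|k|=2^{\lceil j/2\rceil}$) supplies detail the paper omits, but it is the same proof.
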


\begin{proof}
This follows directly from the definition of the filters and the set $\mathbb{S}$.
\end{proof}

Finally, after this preparation, we can now formally define dualizable shearlet systems by filtering
the shearlet-type wavelet systems defined in Definition \ref{def:shearlettypewavelets} using the
filters $G_s$, $s \in \mathbb{S}$.

\begin{definition} \label{def:dualizableshearlets}
For any $s \in \mathbb{S}$, let $\Psi_s(\varphi_1, \psi_1)$ be the shearlet-type wavelet system, and let $G_s$ be the
filter generated by $g \in L^2(\R^2)$ as defined in \eqref{eq:defiGs}. Then
the {\em dualizable shearlet system} $\mathcal{SH}(\varphi_1, \psi_1; g)$ is defined by
\[
\mathcal{SH}(\varphi_1, \psi_1; g) = \{\psi^{\ell}_{\lambda} : \lambda \in \Lambda_s, s \in \mathbb{S}, \ell = 0,1\},
\]
with index set
\[
\Lambda_{s} = \{(j,s,m,p) : j \in \{-1\} \cup \{j_0, j_0+1, \ldots\}, m \in \Z^2, p \ge 0\} \quad \mbox{for } s =  \WQ{s(\lceil j_0 /2\rceil,q_0)},
\]
where
\[
\psi^{0}_{\lambda} =
\left\{
\begin{array}{rcl}
G_s * \varphi_{j_0,s,m,p} & : & \lambda = (-1,s,m,p) \in \Lambda_s,\\
G_s * \psi_{j,s,m,p} & : &  \lambda = (j,s,m,p) \in \Lambda_s,
\end{array}
\right.
\]
and $\psi^{1}_{\lambda} = \psi^{0}_{\lambda} \circ R_{\frac{\pi}{2}}$.
\end{definition}

We immediately observe that the constructed system is compactly supported.

\begin{lemma}\label{lem:compact}
Each dualizable shearlet system is compactly supported.
\end{lemma}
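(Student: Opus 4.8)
The plan is to trace compact support through the three operations that build each element $\psi^\ell_\lambda$: the tensor-product lifting of the univariate generators, the application of the scaling/shearing matrices $A_j S_s$, and the convolution with the filter $G_s$. First I would observe that $\varphi_1, \psi_1 \in L^2(\R)$ are compactly supported by hypothesis, so every tensor product $\varphi^p$ and $\psi^p$ is supported in a compact rectangle in $\R^2$; indeed $\varphi^p(x) = 2^{(p-1)/2}\varphi_1(x_1)\psi_1(2^{p-1}x_2)$ has support contained in $\supp(\varphi_1) \times 2^{-(p-1)}\supp(\psi_1)$, which is compact for each fixed $p$. Applying the invertible affine map $x \mapsto A_j S_s x - D_p m$ preserves compactness of support (an invertible linear image of a compact set is compact), so each $\varphi_{j_0,s,m,p}$ and $\psi_{j,s,m,p}$ is compactly supported.

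The key remaining step is convolution. Here I would invoke the standard fact that if $f$ and $h$ are both compactly supported, then $f * h$ is compactly supported, with $\supp(f*h) \subseteq \supp(f) + \supp(h)$ (the Minkowski sum of two compact sets, hence compact). This reduces the lemma to showing that each filter $G_s$ has compact support in space. Since $\hat G_s$ is defined in \eqref{eq:defiGs} as a finite-or-countable sum of terms $|\hat g(A_j^{-1}S_s^{-T}\xi)|^2$ (and, for $s=0$, the extra term $|\hat\varphi^0(\xi)|^2$), I must argue that the corresponding spatial filter $G_s$ is compactly supported. This is the main obstacle: the spatial support of $G_s$ is not immediate from the frequency-side formula, and one must control it uniformly despite the infinite sum over $j$.

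To handle this, I would argue as follows. Each summand $|\hat g(A_j^{-1}S_s^{-T}\xi)|^2 = \widehat{g_j}(\xi)\,\overline{\widehat{g_j}(\xi)}$ is the Fourier transform of $g_j * \tilde g_j$, where $g_j = |\det A_j|^{1/2} g(A_j S_s^{-1} \cdot)$ is a rescaled-and-sheared copy of $g$ and $\tilde g_j(x) = \overline{g_j(-x)}$; since $g$ is compactly supported and $A_j^{-1}$ contracts as $j$ grows, the spatial supports of these autocorrelation pieces shrink and remain contained in a fixed compact set for $j \ge 0$. Thus the spatial representation of $\hat G_s$ is a sum of functions whose supports lie in one common compact neighborhood of the origin, and the whole sum is supported there; the $|\hat\varphi^0|^2$ term contributes the compactly supported autocorrelation of $\varphi^0$ and is handled identically. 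Hence $G_s$ is compactly supported, convolution against the compactly supported $\varphi_{j_0,s,m,p}$ or $\psi_{j,s,m,p}$ yields a compactly supported function by the Minkowski-sum bound, and finally $\psi^1_\lambda = \psi^0_\lambda \circ R_{\pi/2}$ is a rigid rotation of a compactly supported function and therefore also compactly supported. This establishes the claim for all $\psi^\ell_\lambda$.
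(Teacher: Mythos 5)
Your proposal is correct and follows essentially the same route as the paper's proof: establish compact support of the shearlet-type wavelet elements, establish compact support of the filters $G_s$, and conclude via the fact that the convolution of two compactly supported functions is compactly supported. You in fact supply more detail than the paper does at the one nontrivial point --- the paper simply asserts ``by construction'' that $\mathrm{supp}(G_s) \subset S_s^{-1}A_{j_0}^{-1}[-C_1,C_1]^2$, whereas you justify it via the autocorrelation structure of the summands $|\hat g(A_j^{-1}S_s^{-T}\xi)|^2$ and the observation that their spatial supports for $j \ge j_0$ all lie in one fixed compact set.
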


\begin{proof}
Let $\mathcal{SH}(\varphi_1, \psi_1; g)$ be a dualizable shearlet system. Then by construction, there exist
$C_1, C_2 > 0$ such that, for all $s =  \WQ{s(\lceil j_0 /2\rceil,q_0)} \in \mathbb{S}$ and $j \ge j_0, m \in \Z^2, p \ge 0$,
the filters $G_s$ are compactly supported with
\[
\mathrm{supp}(G_s) \subset S_s^{-1}A_{j_0}^{-1}[-C_1,C_1]^2
\]
and the elements of the shearlet-type wavelet systems satisfy
\[
\mathrm{supp}(\varphi_{j_0,s,m,p}) \subset S_s^{-1}A_{j_0}^{-1}[-C_2,C_2]^2,\,\, \mathrm{supp}(\psi_{j,s,m,p}) \subset S_s^{-1}A_{j}^{-1}[-C_2,C_2]^2.
\]
Hence, there exists some $C > 0$ such that
\[
\mathrm{supp}(G_s*\varphi_{j_0,s,m,p}),\: \mathrm{supp}(G_s*\psi_{j,s,m,p}) \subset S_s^{-1}A_{j_0}^{-1}[-C,C]^2,
\]
which proves the claim.
\end{proof}

\subsection{Comparison with Customarily Defined Shearlet Systems}\label{subsec:comparison}

We now aim to justify the term ``shearlets'' by rewriting the elements of $\mathcal{SH}(\varphi_1, \psi_1; g)$
such that the resemblance with cone-adapted shearlet systems (cf. Subsection \ref{subsec:shearlets}) is revealed.
We will observe that the dualizable shearlet system consists of functions of the form contained in the original shearlet system
except for the oversampling matrix $D_p$ for $p \ge 0$. As already mentioned before, this ingredient ensures that
a dualizable shearlet system is composed of subsystems, which are filtered versions of orthonormal bases. This
structure will be key to have a closed form for an associated dual frame (see Theorem \ref{theo:dualframe}).

\begin{proposition}\label{prop:comparison}
Let $\mathcal{SH}(\varphi_1, \psi_1; g)$ be a dualizable shearlet system. Define
\[
\hat \Theta(\xi) = |\hat{\varphi^0}(\xi)|^2+\sum_{j = 0}^{\infty} |\hat g(A^{-1}_j \xi)|^2
\quad \mbox{and} \quad
\hat \Theta_{\ell}(\xi) = \sum_{j = -\ell}^{\infty} |\hat g(A^{-1}_j \xi)|^2, \quad \ell \ge 0.
\]
Then, for the elements of $\mathcal{SH}(\varphi_1, \psi_1; g)$, the following hold.
\begin{enumerate}
\item[(i)] For all $m \in \Z^2$ and $p \ge 0$,
\[
\psi^0_{\lambda} = (\Theta*\varphi^p)(\cdot - D_p m) \quad \mbox{with}\,\, \lambda = (-1,0,m,p) \in \Lambda_0
\]
and for all $j \ge 0$, $m \in \Z^2$, and $p \ge 0$,
\[
\psi^0_{\lambda} = |\det(A_j)|^{1/2}(\Theta*\psi^p)(A_j\cdot - D_p m) \quad \mbox{with}\,\, \lambda = (j,0,m,p) \in \Lambda_0
\]
\item[(ii)] Letting $s =  \WQ{s(\lceil j_0 /2\rceil,q_0)} \in \mathbb{S} \setminus \{0\}$, for all $k \in \Z$ with $\WQ{s = \tfrac{k}{2^{\lceil j_0/2 \rceil}}}$ and for all $m \in \Z^2$, $p \ge 0$,
\[
\psi^0_{\lambda} = |\det(A_{j_0})|^{1/2}(\Theta_{0}*\varphi^p)(S_kA_{j_0}\cdot - D_p m) \quad \mbox{with}\,\, \lambda = (-1,s,m,p) \in \Lambda_s
\]
and, for all $j \ge j_0$ and $k \in \Z$ with $s = \WQ{\tfrac{k}{2^{\lceil j/2 \rceil}}}$, and for all $m \in \Z^2$, $p \ge 0$,
\[
\psi^0_{\lambda} = |\det(A_j)|^{1/2}(\Theta_{j-j_0}*\psi^p)(S_kA_j\cdot - D_p m) \quad \mbox{with}\,\, \lambda = (j,s,m,p) \in \Lambda_s
\]
\end{enumerate}
\end{proposition}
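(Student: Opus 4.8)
The plan is to prove both identities by a direct computation: substitute the definitions of $\psi^0_\lambda$, of the filters $G_s$ from \eqref{eq:defiGs}, and of the shearlet-type wavelet elements $\psi_{j,s,m,p}$, and then push the convolution through the anisotropic dilation, which is most transparent on the Fourier side. First I would dispose of the coarsest element in (i), namely $\lambda=(-1,0,m,p)$: here $j_0=0$, so $A_0=\mathrm{Id}$ and $\varphi_{0,0,m,p}=\varphi^p(\cdot-D_p m)$, while \eqref{eq:defiGs} gives $\hat G_0=\hat\Theta$. Since convolution commutes with translation, $G_0*\varphi_{0,0,m,p}=(G_0*\varphi^p)(\cdot-D_p m)=(\Theta*\varphi^p)(\cdot-D_p m)$, which is exactly the assertion. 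The coarse element of (ii) then falls out as the special case $j=j_0$ of the general computation below, with the $\varphi$-generator in place of $\psi^p$.

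For the dilated elements I would record once the commutation between convolution and a linear change of variables: writing $f_M:=|\det M|^{1/2}f(M\,\cdot)$ for invertible $M$, one has $G*f_M=(\widetilde G*f)_M$ with $\widehat{\widetilde G}(\xi)=\hat G(M^{T}\xi)$. Applying this with $M=A_jS_s$ to $f=\psi^p(\cdot-D_p m)$, and invoking the elementary identity $A_jS_s=S_kA_j$ for $k=s\,2^{\lceil j/2\rceil}$ — which is precisely the compatibility $s=k/2^{\lceil j/2\rceil}$ appearing in the statement — turns
\[
\psi^0_\lambda=G_s*\psi_{j,s,m,p}=|\det A_j|^{1/2}\bigl(\widetilde G_s*\psi^p\bigr)(S_kA_j\cdot-D_p m).
\]
Thus the entire proposition reduces to identifying the rescaled filter $\widetilde G_s$ with $\Theta_{j-j_0}$ (respectively with $\Theta$ when $s=0$), i.e. to the Fourier identity $\hat G_s\bigl((A_jS_s)^{T}\xi\bigr)=\hat\Theta_{j-j_0}(\xi)$; multiplying back by the common factor $\hat\psi^p((A_jS_s)^{-T}\xi)$ and inverting the Fourier transform then yields the stated closed forms.

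This last identity is the heart of the matter and the only step requiring genuine work. Substituting \eqref{eq:defiGs} and using $(A_jS_s)^T=S_s^{T}A_j$, the left-hand side becomes $\sum_{j'\ge j_0}|\hat g(A_{j'}^{-1}S_s^{-T}S_s^{T}A_j\xi)|^2=\sum_{j'\ge j_0}|\hat g(A_{j'}^{-1}A_j\xi)|^2$, so the shearing cancels and one is left with a pure statement about the parabolic scalings, to be matched against $\hat\Theta_{j-j_0}(\xi)=\sum_{n\ge j_0-j}|\hat g(A_n^{-1}\xi)|^2$ via the reindexing $n=j'-j$. The delicate point — where I expect essentially all of the bookkeeping to live — is that this reindexing rests on the composition law $A_{j'}^{-1}A_j=A_{j'-j}^{-1}$, i.e. on $\lfloor j'/2\rfloor-\lfloor j/2\rfloor=\lfloor(j'-j)/2\rfloor$, so the anisotropic exponent $2^{\lfloor j/2\rfloor}$ must be tracked carefully across the scales where the floor exponents do not compose additively; the definition of $\Theta_\ell$ is tailored exactly so that the scale sum of $\widetilde G_s$ is absorbed into it. For $s=0$ the same computation applies with $M=A_j$, the extra low-pass term $|\hat\varphi^0|^2$ together with the full scale range $j'\ge 0$ producing $\hat\Theta$ rather than $\hat\Theta_{j-j_0}$.
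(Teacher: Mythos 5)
Your proposal is correct and follows essentially the same route as the paper: a direct Fourier-domain computation in which the intertwining relation $A_jS_s=S_kA_j$ (equivalently $A_j^{-1}S_s^{-T}=S_k^{-T}A_j^{-1}$ for $s=k/2^{\lceil j/2\rceil}$) cancels the shear inside $\hat g$, and the scale sum defining $\hat G_s$ is reindexed to produce $\hat\Theta_{j-j_0}$, with the remaining cases obtained by notational modifications. The only difference is organizational -- you isolate a general convolution/dilation commutation lemma and reduce everything to the single filter identity $\hat G_s\bigl((A_jS_s)^T\xi\bigr)=\hat\Theta_{j-j_0}(\xi)$ -- and you explicitly flag the floor-exponent composition $A_{j'}^{-1}A_j=A_{j'-j}^{-1}$ as the delicate bookkeeping step, a point the paper's proof passes over silently.
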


\begin{proof}
We will only consider the last equation in (ii) for $\psi^0_{\lambda}$. The other cases can be derived similarly with minor modifications for notation.
First note that for each $(j,k) \in \N_0 \times \WQ{\{-2^{\lceil j/2 \rceil},\dots, 2^{\lceil j/2 \rceil}\} \backslash \{0\}}$, there exists a
unique shear parameter $\WQ{s(\lceil j_0 /2\rceil,q_0)} \in \mathbb{S}$ with $j \ge j_0$ and $k = \WQ{(2^{\lceil \frac{j}{2} \rceil -\lceil \frac{j_0}{2} \rceil})q_0}$. This ensures
\[
A_j^{-1}S^{-T}_{s} = S^{-T}_{k}A^{-1}_j.
\]
Using this relation, we obtain
\begin{eqnarray*}
\widehat{G_s * \psi_{j,s,m,p}}(\xi) &=& \sum_{j^{'} = j_0}^{\infty}|\hat g(A^{-1}_{j^{'}}S^{-T}_{s}\xi)|^2|\mathrm{det}(A_j)|^{-1/2}\hat\psi^p(A^{-1}_{j}S^{-T}_{s}\xi)
e^{-2\pi i \ip{m}{D_p A^{-1}_{j}S^{-T}_{s}\xi}} \\
&=& |\mathrm{det}(A_j)|^{-1/2}\hat\Theta_{j-j_0}(S^{-T}_kA^{-1}_{j}\xi)\hat\psi^p(S^{-T}_kA^{-1}_{j}\xi)e^{-2\pi i \ip{m}{D_p S^{-T}_kA^{-1}_{j}\xi}}.
\end{eqnarray*}
Application of the inverse Fourier transform and careful consideration of the different cases yield the claim.
\end{proof}

\section{The Dual of Dualizable Shearlet Frames}\label{sec:dual}

Dualizable shearlet systems are foremost designed to provide a closed, easily computable form of a dual
frame while still delivering optimally sparse approximation of cartoon-like functions. The first item
will now be formally stated and proved.

\begin{theorem}\label{theo:dualframe}
Let $\mathcal{SH}(\varphi_1, \psi_1; g) = \{\psi^{\ell}_{\lambda} : \lambda \in \Lambda_s, s \in \mathbb{S}, \ell = 0,1\}$ be
a dualizable shearlet system, which constitutes a frame for $L^2(\R^2)$. Then
\[
\widetilde{\mathcal{SH}}(\varphi_1, \psi_1; g) = \{\tilde{\psi}^{\ell}_{\lambda} : \lambda \in \Lambda_s, s \in \mathbb{S}, \ell = 0,1\}
\]
is a dual frame for $\mathcal{SH}(\varphi_1, \psi_1; g)$, where, for $\lambda \in \Lambda_s$,
\[
\hat{\tilde{\psi}}^0_{\lambda} = \frac{\hat{G}_s\cdot \hat{\psi}^0_{\lambda}}{\sum_{s' \in \mathbb{S}}|\hat{G}_{s'}|^2+|\hat{G}_{s'} \circ R_{\frac{\pi}{2}}|^2}
\quad \mbox{and} \quad
\tilde{\psi}^1_{\lambda} = \tilde{\psi}^0_{\lambda} \circ R_{\frac{\pi}{2}}.
\]
\end{theorem}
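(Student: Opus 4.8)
The plan is to exploit the two structural facts already in place: each frame element factors as a filtered orthonormal element, and for each fixed shear the underlying wavelet family is a basis. Concretely, write $\eta^s_\lambda$ for the shearlet-type wavelet attached to $\lambda\in\Lambda_s$ (so $\eta^s_\lambda=\varphi_{j_0,s,m,p}$ when $\lambda=(-1,s,m,p)$ and $\eta^s_\lambda=\psi_{j,s,m,p}$ otherwise), so that $\psi^0_\lambda=G_s*\eta^s_\lambda$. By Lemma \ref{lemm:ONB}, $\{\eta^s_\lambda:\lambda\in\Lambda_s\}$ is an orthonormal basis of $L^2(\R^2)$ for each $s\in\mathbb{S}$. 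On the Fourier domain $\widehat{\psi^0_\lambda}=\hat G_s\,\widehat{\eta^s_\lambda}$, and since $\hat G_s$ is real and nonnegative by \eqref{eq:defiGs}, convolution by $G_s$ is the self-adjoint Fourier multiplier $M_{G_s}$ with symbol $\hat G_s$. The whole argument then reduces to computing the frame operator $Sf=\sum_{s,\ell,\lambda}\langle f,\psi^\ell_\lambda\rangle\,\psi^\ell_\lambda$ explicitly.

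First I would show that $S$ is itself a Fourier multiplier. For fixed $s$ and $\ell=0$, I write $\langle f,\psi^0_\lambda\rangle=\langle M_{G_s}f,\eta^s_\lambda\rangle$ and collapse the inner sum using completeness of the orthonormal basis $\{\eta^s_\lambda\}_\lambda$,
\[
\sum_{\lambda\in\Lambda_s}\langle f,\psi^0_\lambda\rangle\,\psi^0_\lambda
=M_{G_s}\Big(\sum_{\lambda}\langle M_{G_s}f,\eta^s_\lambda\rangle\,\eta^s_\lambda\Big)
=M_{G_s}^2 f ,
\]
which contributes the symbol $|\hat G_s|^2$. The part $\ell=1$ is treated by the same computation after the rotation $R_{\frac{\pi}{2}}$, using that rotations commute with the Fourier transform and preserve the $L^2$ inner product; it contributes $|\hat G_s\circ R_{\frac{\pi}{2}}|^2$. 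Summing over $s\in\mathbb{S}$ and $\ell$ yields
\[
\widehat{Sf}(\xi)=\Big(\sum_{s\in\mathbb{S}}|\hat G_s(\xi)|^2+|\hat G_s(R_{\frac{\pi}{2}}\xi)|^2\Big)\hat f(\xi)=:\hat M(\xi)\,\hat f(\xi),
\]
so that $S$ is exactly multiplication by the symbol $\hat M$ appearing in the denominator of the claimed dual.

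Next I would read off the conclusion. Since $S$ is multiplication by $\hat M$, we have $\langle Sf,f\rangle=\int\hat M\,|\hat f|^2$, so the frame inequality hypothesized in the statement is equivalent to $0<A\le\hat M(\xi)\le B<\infty$ for a.e.\ $\xi$. Thus $S$ is boundedly invertible with $S^{-1}$ equal to multiplication by $1/\hat M$, and the canonical dual is $\tilde\psi^\ell_\lambda=S^{-1}\psi^\ell_\lambda$, i.e.\ $\widehat{\tilde\psi}^\ell_\lambda=\widehat{\psi^\ell_\lambda}/\hat M$; after substituting $\widehat{\psi^0_\lambda}=\hat G_s\,\widehat{\eta^s_\lambda}$ this is precisely the dual exhibited in the statement. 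The reconstruction formula \eqref{eq:reconstructionformula} and standard frame theory then give that $\widetilde{\mathcal{SH}}(\varphi_1,\psi_1;g)$ is a dual frame for $\mathcal{SH}(\varphi_1,\psi_1;g)$.

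The main obstacle is the rigorous justification of the multiplier identity for $S$: interchanging the sum over $\lambda$ with the Fourier integral and invoking $L^2$-completeness of $\{\eta^s_\lambda\}$ must be done carefully, using the Bessel bound to guarantee unconditional $L^2$-convergence of $\sum_\lambda\langle f,\psi^0_\lambda\rangle\,\psi^0_\lambda$ together with continuity of the bounded multiplier $M_{G_s}$ to pull it through the sum. A secondary point requiring attention is the bookkeeping linking the horizontal ($\ell=0$) and vertical ($\ell=1$) cones, specifically checking that $\hat M$ is invariant under $R_{\frac{\pi}{2}}$ (which follows from evenness of each $\hat G_s$); this invariance is what makes the dual defined by $\tilde\psi^1_\lambda=\tilde\psi^0_\lambda\circ R_{\frac{\pi}{2}}$ agree with $S^{-1}\psi^1_\lambda$, so that the reconstruction is genuinely global rather than holding only cone-by-cone.
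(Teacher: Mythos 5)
Your proposal is correct and follows essentially the same route as the paper's own proof: both arguments fix a shear $s$, write $\psi^0_\lambda=G_s*\eta^s_\lambda$ with $\eta^s_\lambda$ the unfiltered shearlet-type wavelet, collapse the sum over $\lambda$ using the orthonormal basis property of Lemma \ref{lemm:ONB}, and thereby reduce the frame operator and the mixed reconstruction operator to Fourier multipliers with symbol $\hat{M}:=\sum_{s'}|\hat{G}_{s'}|^2+|\hat{G}_{s'}\circ R_{\frac{\pi}{2}}|^2$. Your repackaging via the canonical dual $S^{-1}\psi^\ell_\lambda$ is a clean way of organizing the same computation, and the two technical points you flag (pulling the bounded multiplier through the unconditionally convergent $\lambda$-sum, and the $R_{\frac{\pi}{2}}$-invariance of $\hat{M}$, which holds since each $\hat{G}_{s}$ is even) are exactly the right ones.

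One bookkeeping caveat deserves attention. Your final step identifies $\widehat{S^{-1}\psi^0_\lambda}=\hat{G}_s\hat{\eta}^s_\lambda/\hat{M}$ with the dual exhibited in the statement, but the statement's formula, read literally with $\hat{\psi}^0_\lambda=\hat{G}_s\hat{\eta}^s_\lambda$, gives $\hat{G}_s\cdot\hat{\psi}^0_\lambda/\hat{M}=|\hat{G}_s|^2\hat{\eta}^s_\lambda/\hat{M}$, which differs from your canonical dual by a factor $\hat{G}_s$ and would yield a reconstruction operator with symbol $\sum_{s}\bigl(\hat{G}_s^3+(\hat{G}_s\circ R_{\frac{\pi}{2}})^3\bigr)/\hat{M}\neq 1$. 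The paper's own displayed computation leading to \eqref{eq:proofdual_3} in fact uses $\hat{\tilde{\psi}}^0_\lambda=\hat{G}_s\hat{\eta}^s_\lambda/\hat{M}$, i.e.\ precisely your canonical dual, so your reading matches the intended (and provable) version of the theorem; just be aware that the sentence ``after substituting $\hat{\psi}^0_\lambda=\hat{G}_s\hat{\eta}^s_\lambda$ this is precisely the dual exhibited'' silently drops one power of $\hat{G}_s$ relative to the formula as printed.
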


\begin{proof}
For the proof, we use the convention that $\overline{g}(\cdot) = g(-\cdot)$. We first observe that the structure of
a dualizable shearlet system allows a decomposition as
\begin{eqnarray*}
\sum_{s \in \mathbb{S}}\sum_{\lambda \in \Lambda_s}|\langle f,\psi^0_{\lambda}\rangle|^2 = & = & \sum_{s=\WQ{s(\lceil j_0 /2\rceil,q_0)} \in \mathbb{S}}\Bigl( \sum_{m \in \Z^2}\sum_{p \in \N_0} |\langle \overline{G}_s*f,\varphi_{j_0,\WQ{s(\lceil j_0 /2\rceil,q_0)},m,p}\rangle|^2 \\
&& + \sum_{j = j_0}^{\infty} \sum_{m \in \Z^2}\sum_{p \in \N_0} |\langle \overline{G}_s*f,\psi_{j,\WQ{s(\lceil j_0 /2\rceil,q_0)},m,p}\rangle|^2 \Bigr).
\end{eqnarray*}
Using the orthonormal basis property proven in Lemma \ref{lemm:ONB}, we can conclude that
\beq \label{eq:proofdual_1}
\sum_{s \in \mathbb{S}}\sum_{\lambda \in \Lambda_s}|\langle f,\psi^0_{\lambda}\rangle|^2
= \int_{\R^2} \Bigl( \sum_{s \in \mathbb{S}}|\hat{G}_s(\xi)|^2\Bigr)|\hat{f}(\xi)|^2 d\xi.
\eeq
Similarly, we can show that
\beq \label{eq:proofdual_2}
\sum_{s \in \mathbb{S}}\sum_{\lambda \in \Lambda_s}|\langle f,\psi^1_{\lambda}\rangle|^2
= \int_{\R^2} \Bigl( \sum_{s \in \mathbb{S}}|\hat{G}_s(R_{\frac{\pi}{2}} \xi)|^2\Bigr)|\hat{f}(\xi)|^2 d\xi.
\eeq
By Lemmata \ref{lemm:framePhiPsi} and \ref{lemm:filterequation}, it follows that
\[
\sum_{s \in \mathbb{S}}|\hat{G}_s(\xi)|^2+|\hat{G}_s(R_{\frac{\pi}{2}} \xi)|^2 \leq 2B^2 < \infty  \quad \mbox{for a.e. } \xi.
\]
Combining this inequality with \eqref{eq:proofdual_1} and \eqref{eq:proofdual_2} implies the existence of an upper frame
bound for $\mathcal{SH}(\varphi_1, \psi_1; g)$.

To derive a lower frame bound, we use the support conditions on $\varphi_1$ and $g$, namely \eqref{eq:support_varphi_1}
and \eqref{eq:support_g}, which imply
\begin{eqnarray*}
\lefteqn{\sum_{s \in \mathbb{S}}|\hat{G}_s(\xi)|^2+|\hat{G}_s(R_{\frac{\pi}{2}} \xi)|^2}\\
& \ge & \Bigl(\chi_{[-1/2,1/2]^2}+\sum_{j \ge 0}\sum_{|k| \WQ{\leq 2^{\lceil j/2 \rceil}}} \chi_{\WQ{S^{T}_{k/2^{j/2}}A_{j}}\Omega_{g}}
+ \chi_{\WQ{S^{T}_{k/2^{j/2}}A_{j}}\Omega_{g}} \circ R_{\frac{\pi}{2}}\Bigr)\cdot (\min\{\delta^2_{\varphi_1},\delta_{g}\})^2 \\
&\ge& (\min\{\delta^2_{\varphi_1},\delta_{g}\})^2  > 0 \quad \mbox{for a.e. } \xi.
\end{eqnarray*}

The frame property of $\widetilde{\mathcal{SH}}(\varphi_1, \psi_1; g)$ can be shown by similar arguments.

It remains to prove that $\widetilde{\mathcal{SH}}(\varphi_1, \psi_1; g)$ forms indeed a dual frame of $\mathcal{SH}(\varphi_1, \psi_1; g)$.
For this, we use the structure of the system $\widetilde{\mathcal{SH}}(\varphi_1, \psi_1; g)$ to obtain
\begin{eqnarray*}
\sum_{s \in \mathbb{S}}\sum_{\lambda \in \Lambda_s}\langle \hat f, \hat{\psi}^{0}_{\lambda}\rangle\hat{\tilde{\psi}}^{0}_{\lambda}
& = & \hspace*{-0.5cm} \sum_{s=\WQ{s(\lceil j_0 /2\rceil,q_0)} \in \mathbb{S}} \frac{\hat{G}_{s}}{\sum_{s^{'}\in \mathbb{S}} |\hat{G}_{s^{'}}|^2+|\hat{G}_{s^{'}} \circ R_{\frac{\pi}{2}}|^2}
\Bigl(\sum_{m \in \Z^2}\sum_{p \in \N_0}\langle \overline{\hat{G}}_s\hat{f},\hat{\varphi}_{j_0,s,m,p}\rangle \hat{\varphi}_{j_0,s,m,p} \\
&& + \sum_{j = j_0}^{\infty} \sum_{m \in \Z^2}\sum_{p \in \N_0} \langle \overline{\hat{G}}_s\hat{f},\hat{\psi}_{j,s,m,p}\rangle \hat{\psi}_{j,s,m,p}\Bigr).
\end{eqnarray*}
Lemma \ref{lemm:ONB} again implies
\beq \label{eq:proofdual_3}
\sum_{s \in \mathbb{S}}\sum_{\lambda \in \Lambda_s}\langle \hat f, \hat{\psi}^{0}_{\lambda}\rangle\hat{\tilde{\psi}}^{0}_{\lambda}
= \sum_{s \in \mathbb{S}} \frac{|\hat{G}_s|^2 \cdot \hat{f}}{\sum_{s^{'}\in \mathbb{S}} |\hat{G}_{s^{'}}|^2 + |\hat{G}_{s^{'}}\circ R_{\frac{\pi}{2}}|^2}.
\eeq
Similarly,
\beq \label{eq:proofdual_4}
\sum_{s \in \mathbb{S}}\sum_{\lambda \in \Lambda_s}\langle \hat f, \hat{\psi}^{1}_{\lambda}\rangle\hat{\tilde{\psi}}^{1}_{\lambda}
= \sum_{s \in \mathbb{S}} \frac{|\hat{G}_s\circ R_{\frac{\pi}{2}}|^2 \cdot \hat{f}}{\sum_{s^{'}\in \mathbb{S}} |\hat{G}_{s^{'}}|^2 + |\hat{G}_{s^{'}}\circ R_{\frac{\pi}{2}}|^2}.
\eeq
Using the filter properties as well as \eqref{eq:proofdual_3} and \eqref{eq:proofdual_4} finally yields
\[
\sum_{\ell=0}^{1}\sum_{s\in \mathbb{S}}\sum_{\lambda\in \Lambda_s}\langle \hat f, \hat{\psi}^{\ell}_{\lambda}\rangle\hat{\tilde{\psi}}^{\ell}_{\lambda} = \hat{f}.
\]
The theorem is proved.
\end{proof}

We remark that the dual frame does not form a (dualizable) shearlet system. However, this was also not to be
expected, since already for wavelet frames, only very few dual frames do have the form of a wavelet system \cite{BL07}.

\section{Sparse Approximation Properties}\label{sec:sparse}

We now turn to analyzing sparse approximation properties of dualizable shearlet systems with respect to anisotropic
features which are modeled by the class of cartoon-like functions. We start by formally introducing this
class, which was first defined in \cite{Don01}. We remark that the superscript $2$ in the notion $\cE^2(\RR^2)$
is due to the fact that the discontinuity curve is assumed to be $C^2$. Generalizations of cartoon-like functions
with different types of regularity can, for instance, be found in \cite{KLL12}.

\begin{definition} \label{defi:cartoon}
The set of {\it cartoon-like functions} $\cE^2(\RR^2)$ is defined by
\[
\cE^2(\RR^2) = \{f \in L^2(\RR^2) : f = f_0 + f_1 \cdot \chi_{B}\},
\]
where $B \subset [0,1]^2$ is a nonempty, simply connected set with $C^2$-boundary, $\partial B$ has bounded curvature,
and $f_i \in C^2(\RR^2)$ satisfies $\mathrm{supp}  f_i \subseteq [0,1]^2$ and $\|f_i\|_{C^2} \le 1$ for $i=0, 1$.
\end{definition}
We now let
\[
\Lambda := \{0,1\} \times \bigcup_{s \in \mathbb{S}} \Lambda_s
\]
which is the index set for $\mathcal{SH}(\varphi_1,\psi_1;g)$.
Given a dualizable shearlet system
\[
\mathcal{SH}(\varphi_1, \psi_1; g) = \{\psi^{\ell}_{\lambda} : \lambda \in \Lambda_s, s \in \mathbb{S}, \ell = 0,1\}
\]
with associated dual frame
\[
\widetilde{\mathcal{SH}}(\varphi_1, \psi_1; g) = \{\tilde{\psi}^{\ell}_{\lambda} : \lambda \in \Lambda_s, s \in \mathbb{S}, \ell = 0,1\}
\]
as defined in Theorem \ref{theo:dualframe}, we are interested in $N$-term approximations of $f \in \cE^2(\RR^2)$ of the form
\[
f_N = \sum_{(\ell,\lambda) \in \Lambda_N} \langle f,\psi^{\ell}_{\lambda}\rangle \tilde{\psi}^{\ell}_{\lambda},
\]
where $\Lambda_N \subseteq \Lambda$, $\# \Lambda_N = N$. Let us remind the reader
that we choose expansions in terms of the dual frame, since applications usually require reconstruction from the frame
coefficients $(\langle f,\psi^{\ell}_{\lambda}\rangle)_{\ell, \lambda}$ (cf. Section \ref{sec:intro}).

Without loss of generality, we will only consider shearlet elements $\psi^{0}_{\lambda} \in \mathcal{SH}(\varphi_1, \psi_1; g)$
associated with one frequency cone. Since the elements $\psi^{1}_{\lambda}$ just arise as rotation by 90 degrees, they can be dealt
with similarly. Hence, for the sake of brevity, we from now on omit the superscript ``0'', i.e., we write
\[
\psi_{\lambda} := \psi^{0}_{\lambda} \quad \mbox{and} \quad \tilde{\psi}_{\lambda} := \tilde{\psi}^{0}_{\lambda}.
\]

Next we recall that the optimally achievable approximation rate, i.e., a benchmark for any frame for $L^2(\R^2)$, was also derived in \cite{Don01}.

\begin{theorem}[\cite{Don01}]
Let $(h_i)_{i \in I} \subseteq L^2(\R^2)$ be a frame for $L^2(\R^2)$. Then, for any $f \in \cE^2(\RR^2)$, the $L^2$-error of
best $N$-term approximation by $f_N$ with respect to $(h_i)_{i \in I}$ satisfies
\[
\|f-f_N\|_2 \gtrsim N^{-1}  \qquad \text{as } N \rightarrow \infty.
\]
\end{theorem}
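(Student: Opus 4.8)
The plan is to prove the benchmark (worst-case) lower bound by an embedding-and-counting argument: inside $\cE^2(\RR^2)$ I would exhibit a combinatorially large family of functions that are mutually well separated in $L^2$, and then argue that no frame can $N$-term approximate all of them below the separation scale unless $N$ is large. The point is that the frame $(h_i)_{i \in I}$ is \emph{arbitrary}, so the argument may use nothing about the $h_i$ beyond that each candidate approximant is a linear combination of $N$ of them; this is what forces the rate to be intrinsic to the model class $\cE^2(\RR^2)$ rather than to any particular representation system. The statement is understood in the usual sense that we produce explicit hard instances $f \in \cE^2(\RR^2)$ witnessing that best $N$-term approximation cannot beat $N^{-1}$.

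First I would construct the hypercube of test functions. Fix a small scale $w>0$ and take a reference cartoon function whose boundary $\partial B$ contains a straight, axis-transverse segment of unit length. Along this segment I place $L \sim w^{-1}$ disjoint slots, and in each slot I either leave the boundary unperturbed or push it outward by a $C^2$ bump of width $w$ and height $\sim w^2$. The height--width relation $h \sim w^2$ is exactly what keeps the curvature of $\partial B$ bounded and $\|f_i\|_{C^2}\le 1$, so every function so produced lies in $\cE^2(\RR^2)$. Flipping a single bump alters $f$ by $O(1)$ on a region of area $\sim w^3$, so two such functions at Hamming distance $d$ differ by $\|f_\varepsilon - f_{\varepsilon'}\|_2^2 \sim d\, w^3$. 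A Varshamov--Gilbert selection then yields $M \sim 2^{cL} = 2^{c/w}$ of these functions with pairwise Hamming distance $\gtrsim L$, hence pairwise $L^2$ separation $\gtrsim \sqrt{L\,w^3} \sim w$.

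Next I would run the counting argument. Put $\delta \sim w$, chosen so that the $L^2$-balls of radius $\delta/2$ about the $M$ functions are pairwise disjoint. Suppose, for contradiction, that every $f$ in the family admits an $N$-term approximant $f_N$ from $(h_i)_{i \in I}$ with $\|f - f_N\|_2 \le \delta/2$. Each such $f_N$ is specified by a choice of $N$ indices together with $N$ coefficients; quantizing the coefficients to the relevant accuracy and bounding the number of admissible index sets shows that the number of $L^2$-distinguishable $N$-term approximants is at most $2^{C N \log(1/w)}$. For these to resolve $M = 2^{c/w}$ mutually $\delta$-separated functions one needs $2^{CN\log(1/w)} \gtrsim 2^{c/w}$, i.e. $N \gtrsim w^{-1}/\log(w^{-1}) \sim \delta^{-1}/\log(\delta^{-1})$. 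Rewriting in terms of $N$ yields best $N$-term error $\gtrsim N^{-1}$, where the logarithmic loss inherent in the crude count is removed by Donoho's sharper encoding of the admissible $N$-term models, which is the source of the clean exponent.

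I expect the main obstacle to be precisely this counting step for an \emph{arbitrary, possibly infinite and highly redundant} frame: one cannot simply invoke a finite cardinality $\binom{|I|}{N}$. The resolution is to localize, observing that only atoms whose essential support meets the unit square at scale $w$ can contribute materially to a $\delta$-accurate approximation, which reduces the effective index set to a size polynomial in $w^{-1}$, and then to combine this with a metric-entropy bound on the coefficient ranges. Carrying this out while keeping all constants uniform over the frame is the delicate part; by contrast, the hypercube construction and the separation estimate are routine once the $h \sim w^2$ scaling is fixed.
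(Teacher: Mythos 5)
The paper does not prove this statement at all; it is quoted verbatim from Donoho \cite{Don01} as a benchmark, so there is no internal proof to compare against. Your hypercube construction is, in substance, exactly the device Donoho uses: bumps of width $w$ and height $\sim w^2$ attached to a straight piece of $\partial B$ keep the curvature and the $C^2$ norms bounded, each flip costs $\sim w^3$ in squared $L^2$ norm, and a Varshamov--Gilbert selection gives $2^{c/w}$ functions with pairwise separation $\sim w$; with $\delta = w^{3/2}$ per coordinate and $m(\delta)\sim \delta^{-2/3}$ coordinates this is precisely the hypercube dimension that forces the exponent $\sigma = 1$. That part of your argument is sound.

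The genuine gap is in the counting step, and your proposed fix does not close it. For an \emph{arbitrary} frame the index set $I$ can be uncountable or, even when countable, can consist entirely of elements supported in $[0,1]^2$ (take any frame for $L^2([0,1]^2)$ together with one for the complement), so ``only atoms whose essential support meets the unit square at scale $w$'' excludes nothing and the effective dictionary is not reduced to polynomial size. The theorem as literally stated --- best $N$-term approximation with unrestricted selection from an arbitrary frame --- is not what Donoho proves; his lower bound requires the \emph{polynomial depth search} hypothesis (the $N$ atoms must be drawn from the first $\pi(N)$ elements of a fixed enumeration of the dictionary), or equivalently is phrased as a rate--distortion statement about coders built from quantized coefficients. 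That hypothesis is exactly what makes the cardinality count $2^{CN\log N}$ legitimate, and it is silently suppressed in the paper's formulation (as is customary in the shearlet and curvelet literature). Without it, your contradiction cannot be derived: you would need to show that the upper frame bound alone forbids a frame from containing an $\epsilon$-net of $\cE^2(\RR^2)$ for every $\epsilon$, which is a different (and nontrivial) argument that your sketch does not supply. Relatedly, the final passage from $N\log N \gtrsim \delta^{-1}$ to the clean rate $N^{-1}$ is not a matter of ``sharper encoding'' of the same count; in \cite{Don01} the conclusion is formulated as a bound on the optimal approximation exponent (no rate $N^{-1-\epsilon}$ is achievable), which is the form in which the logarithm is genuinely harmless.
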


The following result shows that the approximation rate of dualizable shearlets for cartoon-like functions can be
arbitrarily close to the optimal rate as the smoothness of the generators is increased, i.e., as $\rho \to 0$.

\begin{theorem}\label{thm:sparsity}
Let $\mathcal{SH}(\varphi_1, \psi_1; g) = \{\psi^{\ell}_{\lambda} : \lambda \in \Lambda_s, s \in \mathbb{S}, \ell = 0,1\}$ be
a dualizable shearlet system, and let $\rho$ be the smoothness parameter defined in \eqref{eq:decay_varphi_1} and \eqref{eq:decay_g}
associated with $\varphi_1, \psi_1, g$. Further, let $f \in \mathcal{E}^2(\R^2)$. Then
\[
\|f - f_N\|_2 \lesssim N^{-1+\frac{15}{2}\rho} \cdot \log(N) \qquad \text{as } N \rightarrow \infty,
\]
where $f_N = \sum_{(\ell,\lambda) \in \Lambda_N} \langle f,\psi^{\ell}_{\lambda}\rangle \tilde{\psi}^{\ell}_{\lambda}$ with
$\Lambda_N \subseteq \Lambda$, $\# \Lambda_N = N$ is the $N$-term approximation
using the $N$ largest coefficients $(\langle f,\psi^{\ell}_{\lambda}\rangle)_{\ell,\lambda}$.
\end{theorem}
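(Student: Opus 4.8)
The plan is to reduce the statement to a weak-$\ell^p$ estimate on the analysis coefficients and then to establish that estimate by scale-by-scale bounds exploiting the anisotropic geometry together with the smoothness encoded in $\rho$. First I would perform the frame-theoretic reduction. Since $f_N$ is obtained by expanding the $N$ largest analysis coefficients against the dual frame $\widetilde{\mathcal{SH}}(\varphi_1,\psi_1;g)$ of Theorem \ref{theo:dualframe}, the residual is $f - f_N = \sum_{(\ell,\lambda)\notin\Lambda_N}\langle f,\psi^\ell_\lambda\rangle\,\tilde\psi^\ell_\lambda$. Invoking the upper frame bound $\tilde B$ of the dual frame (its finiteness was shown in the proof of Theorem \ref{theo:dualframe}) gives
\[
\|f - f_N\|_2^2 \le \tilde B \sum_{(\ell,\lambda)\notin\Lambda_N}|\langle f,\psi^\ell_\lambda\rangle|^2 = \tilde B \sum_{n > N}(c^\ast_n)^2,
\]
where $(c^\ast_n)_n$ is the nonincreasing rearrangement of $(|\langle f,\psi^\ell_\lambda\rangle|)_{\ell,\lambda}$ and $\Lambda_N$ collects the indices of the $N$ largest coefficients. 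Thus it suffices to prove a weak-$\ell^p$ bound $c^\ast_n \lesssim n^{-3/2+\frac{15}{2}\rho}$ up to logarithmic corrections: squaring, summing the tail, and taking the square root then yields exactly $N^{-1+\frac{15}{2}\rho}\log N$, the $\log$ reflecting the borderline nature of the estimate. By the cone symmetry built into Definition \ref{def:dualizableshearlets} it is enough to treat $\langle f,\psi_\lambda\rangle=\langle f,\psi^0_\lambda\rangle$.

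Second, the heart of the argument is the per-coefficient decay estimate. By Proposition \ref{prop:comparison}, each $\psi_\lambda$ with $\lambda=(j,s,m,p)$ is a filtered, parabolically scaled and sheared copy of $\psi^p$ (or $\varphi^p$), essentially supported on a parallelogram of dimensions $\sim 2^{-j}\times 2^{-j/2}$. I would split $f=f_0+f_1\chi_B$ as in Definition \ref{defi:cartoon} and distinguish whether the support of $\psi_\lambda$ meets the discontinuity curve $\partial B$. If it does not, then $f$ is $C^2$ on that parallelogram, and combining a Taylor-expansion/integration-by-parts argument with the vanishing-moment and decay hypotheses \eqref{eq:decay_varphi_1} and \eqref{eq:decay_g}, whose strength is quantified by $\alpha,\beta$ and hence by $\rho$, yields rapid decay in $j$ together with a summable decay in the vertical-frequency parameter $p$ coming from the frequency-band separation of the $\psi^p$ and the filter decay. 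If it does, I would localize to the box and estimate directly, obtaining the baseline bound $|\langle f,\psi_\lambda\rangle|\lesssim 2^{-3j/4}$, multiplied by a factor decaying in the offset $|s-s_0|$ between $s$ and the slope $s_0$ of $\partial B$ at the relevant point, and again by a summable factor in $p$. The finiteness of $\alpha,\beta$ is what turns these idealized bounds into bounds carrying $2^{c\rho j}$ corrections that accumulate to the exponent $\frac{15}{2}\rho$.

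Third, I would convert these estimates into the counting bound $\#\{(\ell,\lambda):|\langle f,\psi^\ell_\lambda\rangle|>\epsilon\}\lesssim\epsilon^{-2/3}$ up to the $\rho$- and log-corrections. At scale $j$ there are $O(2^{j/2})$ admissible shears and, per shear, $O(2^{j/2})$ translates whose box meets the length-$O(1)$ curve $\partial B$; the $O(1)$ aligned, edge-crossing shearlets carry coefficients of size $2^{-3j/4}$, while the misaligned shears and the $p>0$ layers are controlled by the decay factors of the previous step, making the sums over shear and over $p$ converge with only logarithmic loss. Feeding this count into the tail bound from the first step produces the claimed rate; the smooth summand $f_0$ and the regular parts of $f_1\chi_B$ are handled by the non-intersecting case alone and contribute terms of strictly higher order.

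I expect the principal obstacle to be exactly the intersecting case combined with the two-fold summation over shear and over the oversampling parameter $p$: one must secure a shear-offset decay strong enough to sum over $O(2^{j/2})$ orientations and a $p$-decay strong enough to offset the additional $p$-layers introduced by the $D_p$ oversampling, and then carry out the counting tightly enough to realise the improved power on the $\log$-term advertised in the introduction. This is precisely where the quantitative coupling $\alpha\ge\frac{6}{\rho}+1$, $\beta>\alpha+1$ between smoothness and $\rho$ is consumed, and where the sharpened bookkeeping over the anisotropic index set, refining the approach of \cite{KL11}, must be performed with care.
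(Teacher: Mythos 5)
Your overall architecture -- reduce to a tail sum of analysis coefficients via the upper frame bound of the dual, split according to whether a shearlet meets the discontinuity curve, prove per-coefficient decay with a shear-offset factor plus summable $p$-decay, and then count -- is the same skeleton as the paper's proof (and as the earlier argument of \cite{KL11} that both follow). However, there is a genuine gap at the step where you assert that each $\psi_\lambda$ with $\lambda=(j,s,m,p)$ is ``essentially supported on a parallelogram of dimensions $\sim 2^{-j}\times 2^{-j/2}$.'' This is false for dualizable shearlets: $\psi_\lambda=G_s*\psi_{j,s,m,p}$, and the filter $G_s$ is supported in $S_s^{-1}A_{j_0}^{-1}[-C_1,C_1]^2$, where $j_0$ is the \emph{coarsest} scale at which the shear $s$ occurs (Lemma \ref{lem:compact}). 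Hence $\supp(\psi_\lambda)\subset S_s^{-1}A_{j_0}^{-1}[-C,C]^2$, which for $j\gg j_0$ (e.g.\ $s=0$, where $j_0=0$ and the support stays of size $O(1)$ at every scale) is vastly larger than the standard shearlet box. Both your smooth/non-smooth dichotomy and your count of ``$O(2^{j/2})$ translates per shear meeting the curve'' rely on the $2^{-j}\times2^{-j/2}$ localization and therefore break down as stated.

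The paper's resolution, which is the one genuinely new idea in its proof and is absent from your proposal, is to replace $\psi_\lambda$ by the approximant $\psi^{\sharp}_{\lambda}$ of Lemma \ref{lemm:technical_3}, obtained by truncating the filter sum to scales $j'\ge\max\{\lfloor j(1-\rho)\rfloor,j_0\}$. This yields $\supp(\psi^{\sharp}_{\lambda})\subset S_s^{-1}A_j^{-1}([-2^{j\rho}C,2^{j\rho}C]\times[-2^{j\rho/2}C,2^{j\rho/2}C]+m)$ together with the controllable error $\|\psi_\lambda-\psi^{\sharp}_{\lambda}\|_2\lesssim 2^{-j\rho\alpha/2}$, which is negligible since $\alpha\ge\frac{6}{\rho}+1$. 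All localization-based steps (the sets $\Lambda^0,\Lambda^1$, the counting bounds, and the linearization of the edge inside the support) are then carried out for $\psi^{\sharp}_{\lambda}$, and the $2^{3\rho j}$ and $2^{6\rho j}$ losses that accumulate to the exponent $\frac{15}{2}\rho$ come precisely from the $2^{j\rho}$-enlarged support of $\psi^{\sharp}_{\lambda}$ -- not, as you suggest, directly from the finiteness of $\alpha,\beta$ in the Fourier decay hypotheses. Without introducing such approximants (or an equivalent device controlling the spatial spread of $G_s*\psi_{j,s,m,p}$ at fine scales), your argument cannot be completed. The remaining ingredients you list -- the $2^{-3j/4}$ baseline, the cubic decay in the shear offset $|k_s+2^{\lceil j/2\rceil}\hat s|$, the $2^{-\alpha p/2}$ decay in the oversampling index, and the weak-$\ell^p$ bookkeeping -- do match Lemmata \ref{lemm:technical_2} and \ref{lemm:technical_5} and the paper's counting of $\#(\tilde\Lambda^0)\sim J2^{J(1+\rho)/2}$.
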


Before we discuss the overall structure and details of the proof, we would like to highlight that using this
new proof technique, even previous results can be improved. In fact, we can lower the exponent of the $\log$-factor
in the decay rate of the compactly supported shearlet system defined in \cite{KL11} from $\log(N)^{3/2}$ to
$\log(N)$.

\begin{cor} \label{cor:approx}
For each $f \in \mathcal{E}^2(\R^2)$, the compactly supported shearlet system defined in \cite{KL11} provides an approximation rate of
\[
\|f - f_N\|_2 \lesssim N^{-1} \cdot \log(N) \quad \text{as } N \rightarrow \infty
\]
with $f_N$ being the $N$-term approximation consisting of the $N$ largest shearlet coefficients.
\end{cor}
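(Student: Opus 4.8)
The plan is to re-run the proof of Theorem~\ref{thm:sparsity} with the compactly supported shearlet system of \cite{KL11} substituted for the dualizable system, exploiting the fact that the core counting argument of Section~\ref{sec:proofMainTheorem} is essentially agnostic to the precise construction: it consumes only coefficient-decay estimates for $\langle f,\psi_\lambda\rangle$ together with the parabolic-scaling geometry of the discontinuity curve $\partial B$. First I would verify that the elements of the \cite{KL11} system satisfy the same pointwise and frequency-support estimates that the preliminary lemmata of Section~\ref{sec:proofs} establish for the dualizable elements. This is the step that makes the whole argument transferable, and it should go through because the generators in \cite{KL11} may be taken arbitrarily smooth, so that the relevant decay of their shearlet coefficients is at least as strong as what we use here.

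The decisive point is that the new technique produces only a single $\log$-factor. In the original estimate of \cite{KL11}, the additional $\log(N)^{1/2}$ (yielding $\log(N)^{3/2}$ overall) arises from summing the coefficient contributions across the $O(\log N)$ active scales by means of a cruder bound on the number of significant coefficients per scale. I would instead carry through the sharper organization of the sum used in the core part of Section~\ref{sec:proofMainTheorem}: group the coefficients by scale $j$, bound the number of coefficients that genuinely interact with $\partial B$ at scale $j$ by $O(2^{j/2})$ using the bounded curvature and finite length of the boundary, and combine these with the size estimates so that the summation over scales contributes exactly one factor of $\log N$ rather than a power exceeding one.

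It remains to explain why the exponent is the clean $-1$ rather than $-1+\frac{15}{2}\rho$. The penalty term $N^{\frac{15}{2}\rho}$ appearing in Theorem~\ref{thm:sparsity} reflects the limited smoothness forced by the dualizable construction; the \cite{KL11} system is a direct compactly supported shearlet construction whose generators can be chosen with arbitrarily fast decay, so the smoothness-limited loss incurred here is simply not present. Concretely, in the coefficient estimates feeding the core argument one may let the smoothness defect tend to zero, which drives the corresponding exponent to $-1$ and leaves precisely $\|f-f_N\|_2 \lesssim N^{-1}\log(N)$.

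The main obstacle I anticipate is the first step, namely matching the coefficient-decay and support estimates of the \cite{KL11} elements to those the core summation requires. The dualizable system carries the extra oversampling matrix $D_p$ and the filtering by $G_s$ (cf.\ Proposition~\ref{prop:comparison}), so the bounds in Section~\ref{sec:proofs} are phrased in a form adapted to that structure; one must re-derive the analogous estimates directly for the \cite{KL11} shearlets and confirm that the bounded-curvature geometry of $\partial B$ still yields the $O(2^{j/2})$ count of significant coefficients per scale. Once this bookkeeping is in place, the core argument applies essentially verbatim and the improved $\log$-exponent follows.
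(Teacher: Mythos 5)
Your proposal follows essentially the same route as the paper's own proof: one re-runs the argument for Theorem \ref{thm:sparsity} with the \cite{KL11} shearlets, observing that the oversampling index $p$ and the filtering disappear, that the support and directional-vanishing-moment conditions of Lemmata \ref{lemm:technical_3}(i) and \ref{lemm:technical_4} hold with $\rho=0$, and hence that the counting gives $\#(\tilde{\Lambda}^0)\lesssim J2^{J/2}$ and the final bound $2^{-J}$ with $N\sim J2^{J/2}$. The one point to sharpen is that $\rho=0$ is obtained not by letting a smoothness defect tend to zero but because the \cite{KL11} shearlets already have supports of the exact parabolic size $S^{-1}_sA^{-1}_j([-C,C]^2+m)$, so the approximation step via $\psi^{\sharp}_{\lambda}$ is simply unnecessary.
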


The proof of this corollary follows the proof of Theorem \ref{thm:sparsity} quite closely except for slight
modifications which we describe in Subsection \ref{subsec:technical_0}.

\section{Proof of Theorem \ref{thm:sparsity}}\label{sec:proofMainTheorem}

Since the proof is rather technical and complex, we start by discussing its overall architecture. We recall from
Proposition \ref{prop:comparison} that for all $\lambda = (j,s,m,p) \in \Lambda_s$, $s = \WQ{s(\lceil j_0 /2\rceil,q_0)} \in \mathbb{S}$,
$j \ge j_0$ and $k \in \Z$ with \WQ{$s = \tfrac{k}{2^{\lceil j/2 \rceil}}$},
\beq \label{eq:formclassical}
\psi_{\lambda} = |\det(A_j)|^{1/2}(\Theta_{j-j_0}*\psi^p)(S_kA_j\cdot - D_p m).
\eeq
We mention that without loss of generality, we only need to consider shearlet elements of this form. Nearly
identical arguments can be applied for the elements $\psi_{\lambda}$ with $\lambda = (-1,s,m,p) \in \Lambda_s$
with minor modifications for notation.

One might think that due to the fact that dualizable shearlets have this strong structural similarity with ``normal''
shearlets, the steps of the proof of the (optimal sparse) approximation result from \cite{KL11} could be directly
applied. This is however not the case. Although, in the end, we will be able to utilize some of those steps,
careful preparation for this is required. Moreover, it will turn out that we will eventually even improve
the approach from \cite{KL11} in the sense of Corollary \ref{cor:approx}, i.e., by reducing the number of
$\log$-factors.

In a first step, we prove two basic estimates for the shearlet coefficients, namely for an $L^\infty$ and for an
$L^2$ function. This is made precise in the following lemma, whose technically natured proof can be found in
Subsection \ref{subsec:technical_2}.

\begin{lemma} \label{lemm:technical_2}
\begin{enumerate}
\item[(i)] For $f \in L^{\infty}(\R^2)$, we have
\[
|\langle f,\psi_{\lambda}\rangle| \lesssim 2^{-\frac{3}{4}j} \cdot \|f\|_{\infty} \quad \mbox{for all } \lambda = (j,s,m,p) \in \Lambda.
\]
\item[(ii)]  For $f \in L^2(\R^2)$, we have
\[
|\langle f,\psi_{\lambda}\rangle| \lesssim 2^{-\frac{\alpha}{2} p} \cdot \|f\|_2 \quad \text{for all } \lambda = (j,s,p,m) \in \Lambda.
\]
\end{enumerate}
\end{lemma}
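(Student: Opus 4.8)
The plan is to prove both estimates by dominating the inner product by a norm of $\psi_\lambda$ and then exploiting the normalized form \eqref{eq:formclassical}, $\psi_\lambda = |\det(A_j)|^{1/2}(\Theta_{j-j_0}*\psi^p)(S_kA_j\,\cdot\,-D_pm)$. For (i) I would start from H\"older's inequality $|\langle f,\psi_\lambda\rangle|\le\|f\|_\infty\|\psi_\lambda\|_1$. Since the $L^1$-norm is invariant under the unimodular shear $S_k$ and under translation, and only picks up a factor $|\det(A_j)|^{-1}$ from the dilation, the affine change of variables gives $\|\psi_\lambda\|_1 = |\det(A_j)|^{-1/2}\|\Theta_{j-j_0}*\psi^p\|_1$. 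Because $|\det(A_j)|^{-1/2}=2^{-(j+\lfloor j/2\rfloor)/2}\lesssim 2^{-\frac34 j}$, the claim reduces to the scale- and $p$-uniform bound $\|\Theta_\ell*\psi^p\|_1\lesssim 1$ for all $\ell=j-j_0\ge 0$ and $p\ge 0$.

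The hard part is this uniform $L^1$ bound, and I expect it to be the main obstacle. Let $G^{(0)}$ denote the autocorrelation of $g$, i.e.\ the compactly supported function with $\widehat{G^{(0)}}=|\hat g|^2$; unfolding the definition of $\Theta_\ell$ gives the Littlewood--Paley-type superposition $\Theta_\ell=\sum_{j'=-\ell}^{\infty}|\det(A_{j'})|\,G^{(0)}(A_{j'}\,\cdot\,)$ of anisotropic dilates. The naive triangle inequality fails here: each summand has $L^1$-norm $\|G^{(0)}\|_1$, so it diverges, and indeed $\Theta_\ell$ by itself is not uniformly integrable, its low-frequency ($\xi_1\to 0$) content growing with $\ell$. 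The remedy is to use two features at once. First, the factor $\min\{1,|\xi_1|^\alpha\}$ in \eqref{eq:decay_g} forces $\widehat{G^{(0)}}$ to vanish to high order at $\xi_1=0$, i.e.\ $G^{(0)}$ has many vanishing moments in $x_1$, producing cancellation across the scales $j'$ that controls the singularity of $\Theta_\ell$ along $\{x_1=0\}$. Second, convolving with $\psi^p$, whose $x_1$-factor is the wavelet $\psi_1$ (so $\hat\psi^p$ again vanishes like $|\xi_1|^\alpha$ at $\xi_1=0$), removes precisely the non-convergent low-frequency part and makes the estimate uniform in $\ell$. Concretely I would prove a pointwise almost-orthogonality bound $|(\Theta_\ell*\psi^p)(x)|\lesssim w(x)$ with $w$ integrable and independent of $\ell,p$, by estimating the overlap of $G^{(0)}(A_{j'}\,\cdot\,)$ against $\psi^p$ scale by scale and summing the resulting geometric series; the single $\xi_2$-derivative bounds in \eqref{eq:decay_g} and \eqref{eq:decay_varphi_1} supply the transverse ($x_2$) decay, while the vanishing moments supply the $x_1$ decay. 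The delicate point, and the reason this is the crux, is that the hypotheses control only $\partial_{\xi_2}\hat g$, so the $x_1$-decay cannot be read off from smoothness of $\hat g$ in $\xi_1$ and must instead be extracted from the inter-scale cancellation.

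For (ii) I would use Cauchy--Schwarz, $|\langle f,\psi_\lambda\rangle|\le\|f\|_2\|\psi_\lambda\|_2$, and observe that the affine change of variables is $L^2$-isometric once the normalization is accounted for, so by Plancherel $\|\psi_\lambda\|_2=\|\Theta_{j-j_0}*\psi^p\|_2=\|\hat\Theta_\ell\,\hat\psi^p\|_2$. Here the $p$-decay comes from a frequency-support mismatch rather than from cancellation. On one hand $\hat\Theta_\ell$ is uniformly bounded (Lemma \ref{lemm:framePhiPsi}) and, on the range $|\xi_1|\approx 1$ selected by the factor $\hat\psi_1(\xi_1)$, it is concentrated in the parabolic wedge $|\xi_2|\lesssim|\xi_1|^{1/2}$, hence at $|\xi_2|=O(1)$. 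On the other hand, for $p>0$ the factor $\hat\psi_1(2^{-(p-1)}\xi_2)$ in $\hat\psi^p$ vanishes to order $\alpha$ at $\xi_2=0$, so it is $O(2^{-\alpha p})$-small exactly where $\hat\Theta_\ell$ lives. Substituting $\eta_2=2^{-(p-1)}\xi_2$ to absorb the $2^{-(p-1)/2}$ normalization and inserting the decay bounds on $\hat\psi_1$ and $\hat\Theta_\ell$ then yields $\|\psi_\lambda\|_2\lesssim 2^{-\frac{\alpha}{2}p}$ (in fact with room to spare), which is the asserted estimate; the case $p=0$ is the trivial bound $\|\psi_\lambda\|_2\lesssim 1$.

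Finally, the auxiliary facts needed above are elementary: the uniform boundedness of $\hat\Theta_\ell$, together with $\|\psi^p\|_2=\|\psi_1\|_2^2$ and $\|\psi^p\|_1\lesssim 2^{-(p-1)/2}$, follow at once from the tensor structure and the $L^2$-normalization of the one-dimensional system. The same reductions apply verbatim to the scaling-function elements $\lambda=(-1,s,m,p)$ of Proposition \ref{prop:comparison}, with $\Theta_\ell$ replaced by $\Theta$ or $\Theta_0$, so these require no separate treatment.
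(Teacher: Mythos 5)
Your proposal is correct and follows essentially the same route as the paper: part (i) is H\"older's inequality plus the affine change of variables, reducing everything to the uniform bound $\|\Theta_{j-j_0}*\psi^p\|_1\lesssim 1$ (which the paper isolates as Lemma \ref{lemm:technical_1} and proves by exactly the scale-by-scale mechanism you sketch, with the $\min\{1,|\xi_1|^{\alpha}\}$ vanishing-moment factor beating the growth $2^{3j'/2}$ of the supports), and part (ii) is Cauchy--Schwarz/Plancherel with the $2^{-\alpha p/2}$ decay extracted from the mismatch between the $\xi_2$-localization of $\hat\Theta_{j-j_0}$ and the order-$\alpha$ vanishing of $\hat\psi^p$ at $\xi_2=0$, which is the paper's splitting of $\|\hat G_s\hat\psi_{j,s,m,p}\|_2^2$ into the ranges $j'\le j+p$ and $j'>j+p$.
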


One main difficulty in proving this result is the analysis of the function $\Theta_{j-j_0}*\psi^p$, which is now
the generator of the shearlet element in \eqref{eq:formclassical}. In fact, we require a universal upper bound
for this function, which is given by the following result. For its proof, we refer to Subsection \ref{subsec:technical_1}.

\begin{lemma} \label{lemm:technical_1}
There exists a universal constant $C$ such that
\[
\|\Theta_{j-j_0}*\psi^p\|_1 \le C \quad \mbox{for all } j \ge j_0, p \ge 0.
\]
\end{lemma}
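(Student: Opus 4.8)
The plan is to transfer the spatial $L^1$-bound into a weighted $L^2$-bound in the Fourier domain and then estimate the relevant derivatives of $\widehat{\Theta_{j-j_0}\ast\psi^p}=\hat\Theta_{j-j_0}\cdot\hat\psi^p$ uniformly in $j\ge j_0$ and $p\ge0$. Writing $\ell:=j-j_0\ge0$ and $h:=\Theta_\ell\ast\psi^p$, the starting point is the Cauchy--Schwarz inequality, valid since $(1+|x_1|)^{-1}(1+|x_2|)^{-1}\in L^2(\R^2)$,
\[
\|h\|_1\le \big\|(1+|x_1|)^{-1}(1+|x_2|)^{-1}\big\|_2\cdot\big\|(1+|x_1|)(1+|x_2|)\,h\big\|_2 .
\]
By Plancherel the last factor is controlled by the $L^2$-norms of the partial derivatives $\partial_{\xi_1}^{a}\partial_{\xi_2}^{b}\hat h$ with $a,b\in\{0,1\}$, so it suffices to bound these four quantities by a constant independent of $\ell$ and $p$. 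Since $\hat h=\hat\Theta_\ell\hat\psi^p$, the Leibniz rule splits each such derivative into terms in which $\partial_{\xi_1}^{a}\partial_{\xi_2}^{b}$ is distributed between $\hat\Theta_\ell$ and $\hat\psi^p$; the whole estimate therefore reduces to pointwise bounds for $\hat\Theta_\ell,\ \partial_{\xi_1}\hat\Theta_\ell,\ \partial_{\xi_2}\hat\Theta_\ell,\ \partial_{\xi_1}\partial_{\xi_2}\hat\Theta_\ell$, together with the corresponding bounds for $\hat\psi^p$.

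The second step is to derive these pointwise bounds for $\hat\Theta_\ell(\xi)=\sum_{j\ge-\ell}|\hat g(A_j^{-1}\xi)|^2$, where $A_j^{-1}\xi=(2^{-j}\xi_1,2^{-\lfloor j/2\rfloor}\xi_2)$ so that each scale is a dyadic dilate of $|\hat g|^2$ concentrated around $|\xi_1|\sim 2^{j}$. For $\hat\Theta_\ell$ itself the uniform bound is immediate from Lemma \ref{lemm:framePhiPsi}, and for the $\xi_2$-derivative I would differentiate under the sum and invoke \eqref{eq:decay_g}, which supplies the decay $\tfrac{\min\{1,|\xi_1|^\alpha\}}{(1+|\xi_1|)^\beta(1+|\xi_2|)^\beta}$ for $\hat g$ and $\partial_{\xi_2}\hat g$. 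For the $\xi_1$-derivatives, where \eqref{eq:decay_g} gives no information, I would instead use that $g$ is compactly supported, so that $\partial_{\xi_1}\hat g=\widehat{-2\pi i x_1 g}$ and $\partial_{\xi_1}\partial_{\xi_2}\hat g=\widehat{(-2\pi i)^2 x_1 x_2 g}$ are globally bounded; pairing these crude bounds with the genuine decay of $\hat g$ and $\partial_{\xi_2}\hat g$ in the products produced by the chain rule, each summand still carries the full decay of one factor. The chain rule produces prefactors $2^{-j}$ and $2^{-\lfloor j/2\rfloor}$, and one checks that the resulting scale sums converge \emph{uniformly in $\ell$}: for $j\to+\infty$ the factor $\min\{1,|2^{-j}\xi_1|^\alpha\}\sim 2^{-j\alpha}$ forces convergence, while for $j\to-\infty$ the denominator $(1+|2^{-j}\xi_1|)^{-\beta}\sim 2^{j\beta}|\xi_1|^{-\beta}$ beats the prefactors precisely because $\beta>\alpha+1$ is large (one needs $\beta>1$ for $\partial_{\xi_1}\hat\Theta_\ell$ and $\beta>\tfrac32$ for $\partial_{\xi_1}\partial_{\xi_2}\hat\Theta_\ell$). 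This yields bounds of the schematic form $|\partial_{\xi_1}\hat\Theta_\ell(\xi)|\lesssim|\xi_1|^{-1}$ near the active scale, with $\beta$-decay in the rescaled $\xi_2$-variable, all uniform in $\ell$.

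The third step assembles the estimate. Using \eqref{eq:decay_varphi_1} for $\hat\psi_1$ and $\hat\varphi_1$ together with the product form $\hat\psi^p(\xi)=2^{-(p-1)/2}\hat\psi_1(\xi_1)\hat\psi_1(2^{-(p-1)}\xi_2)$ (and its $p=0$ analogue), every derivative $\partial_{\xi_1}^a\partial_{\xi_2}^b\hat\psi^p$ is a product of a controlled $\xi_1$-factor with a rescaled $\xi_2$-factor carrying a power $2^{-(p-1)}$ for each $\xi_2$-derivative. Crucially, the $x_2$-dilation is $L^2$-normalized so that $\|\hat\psi^p\|_2=\|\psi_1\|_2^2$ is independent of $p$ and the derivative norms even decay in $p$; combining this with the uniform-in-$\ell$ pointwise bounds from the second step via Leibniz and integrating over $\R^2$ then delivers the four desired uniform $L^2$-bounds. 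The only delicate point in the integration is near $\xi_1=0$, where $\partial_{\xi_1}\hat\Theta_\ell\sim|\xi_1|^{-1}$ fails to be square-integrable on its own; this is exactly tamed by the vanishing factor $\hat\psi_1(\xi_1)\lesssim|\xi_1|^\alpha$, which is where the lower bound $\alpha\ge\frac6\rho+1$ enters.

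I expect the main obstacle to be the control of the $\xi_1$-derivatives of $\hat\Theta_\ell$: the hypothesis \eqref{eq:decay_g} is one-sided, supplying decay only under $\partial_{\xi_2}$, so the $\xi_1$-derivatives must be routed through the non-quantitative boundedness coming from compact support of $g$, and one must verify that pairing these decay-free bounds with the genuine decay of the remaining factors still produces scale sums that converge and, most importantly, are uniform both in the number $\ell$ of coarse scales and in the vertical dilation parameter $p$. Verifying this double uniformity, together with the borderline $L^2$-integrability at $\xi_1=0$, is where the precise smoothness budget encoded in $\alpha$ and $\beta$ is genuinely needed.
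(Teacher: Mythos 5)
Your argument is correct in outline, but it takes a genuinely different route from the paper. The paper never passes through weighted $L^2$-estimates or derivatives of $\hat\Theta_{j-j_0}$: it decomposes $\widehat{\Theta_{j-j_0}*\psi^p}=\sum_{j'\ge j_0-j}|\hat g(A_{j'}^{-1}\cdot)|^2\hat\psi^p=\sum_{j'}\hat h_{j',p}$, observes that each $h_{j',p}$ is compactly supported with $\supp(h_{j',p})\subset A_{\max\{-j',0\}}[-C,C]^2$, and bounds $\|h_{j',p}\|_1\le|\supp(h_{j',p})|\cdot\|h_{j',p}\|_\infty\le|\supp(h_{j',p})|\cdot\|\hat h_{j',p}\|_{L^1}$; the scale sums then converge because the volume growth $2^{3|j'|/2}$ of the coarse-scale supports is beaten by the Fourier-side decay supplied by \eqref{eq:decay_varphi_1}--\eqref{eq:decay_g} via \eqref{eq:inequality}. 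That argument is ``zero-derivative'': it only uses the $\ell=0$ case of the decay hypotheses and sidesteps entirely the issue you correctly flag as the main obstacle, namely that \eqref{eq:decay_g} says nothing about $\partial_{\xi_1}\hat g$. Your route instead buys a single global Cauchy--Schwarz/Plancherel reduction (no scale-by-scale support bookkeeping on the spatial side), at the price of having to manufacture bounds for $\partial_{\xi_1}\hat\Theta_\ell$ and $\partial_{\xi_1}\partial_{\xi_2}\hat\Theta_\ell$ from the mere boundedness of $\partial_{\xi_1}\hat g$ and $\partial_{\xi_1}\partial_{\xi_2}\hat g$ (compact support of $g$), with the product rule ensuring one factor in each term retains full decay; your accounting of the resulting $|\xi_1|^{-1}$-type singularity at the active scale, its absorption by $|\hat\psi_1(\xi_1)|\lesssim\min\{1,|\xi_1|^\alpha\}$, and the conditions $\beta>1$ resp.\ $\beta>\tfrac32$ for uniform convergence of the coarse-scale sums are the right checkpoints, and they all hold comfortably under the standing assumptions $\alpha\ge\frac6\rho+1$, $\beta>\alpha+1$. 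Both proofs ultimately lean on the same mechanism -- geometric decay across scales uniform in the number $j-j_0$ of coarse scales and in $p$ -- so either is acceptable; the paper's is the more economical in hypotheses, yours is the more systematic in structure.
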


Aiming to drive an at least similar overall strategy as in the proof of \cite{KL11}, let us recall the hypotheses
from this paper. One key condition is that customarily defined shearlets generated by $A_j$ and $S_k$ are supported
in $S^{-1}_sA^{-1}_j[-C,C]^2$ for some $C > 0$ with $\WQ{s = \frac{k}{2^{\lceil j/2 \rceil}}}$. In this area, the $C^2$ curvilinear
singularity of a cartoon-like function is well approximated by its tangent. However, Lemma \ref{lem:compact} shows
that for dualizable shearlets, we can only conclude that $\mbox{supp}(\psi_{\lambda}) \subset S^{-1}_sA^{-1}_{j_0}[-C,C]^2$.
It can be computed that $\mbox{supp}(\psi_{\lambda})$ is in fact essentially the same as the support of customarily
defined shearlets for scale $j = j_0$. However, $\mbox{supp}(\psi_{\lambda})$ is much larger when $j \gg j_0$.
To resolve this issue for our newly defined dualizable shearlets $\psi_{\lambda}$, we will approximate $\psi_{\lambda}$
by more suitable functions of smaller supports comparable to the size of the supports of ``normal'' shearlets with
controllable error bound. This is the essence of the following result, whose proof is outsourced to Subsection
\ref{subsec:technical_3}. For the following lemmata, the parameter $\rho$ defined in \eqref{eq:decay_varphi_1} will be used.

\begin{lemma}\label{lemm:technical_3}
Let $\lambda = (j,s,m,p) \in \Lambda_s$ with $s = \WQ{s(\lceil j_0 /2\rceil,q_0)} \in \mathbb{S}$, $j \ge j_0$ and set
\[
\hat{\psi}^{\sharp}_{\lambda} := \sum_{j^{'} = \max\{\lfloor j(1-\rho) \rfloor,j_0\}}^{\infty}|\hat g (A^{-1}_{j^{'}}S^{-T}_s \xi)|^2 \hat\psi_{j,s,m,p}.
\]
Then the following hold.
\begin{itemize}
\item[(i)] There exists some $C > 0$ such that
\[
\supp(\psi^{\sharp}_{\lambda}) \subset S^{-1}_s A^{-1}_j \Bigl([-2^{j\rho}C,2^{j\rho}C] \times [-2^{j\rho/2}C, 2^{j\rho/2}C]+m\Bigr).
\]
\item[(ii)] We have
\[
|\langle f,(\psi_{\lambda}-\psi^{\sharp}_{\lambda})\rangle| \lesssim 2^{-\frac{j}{2}\rho \alpha} \|f\|_2 \quad \text{for all } f \in L^2(\R^2).
\]
\end{itemize}
\end{lemma}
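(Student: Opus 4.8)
The plan is to treat both parts through one structural observation. Writing $\ell_{\min} := \max\{\lfloor j(1-\rho)\rfloor, j_0\}$, the sharp element is exactly $\psi^{\sharp}_{\lambda} = \tilde G_s * \psi_{j,s,m,p}$, where $\tilde G_s$ is the \emph{truncated filter} $\hat{\tilde G}_s(\xi)=\sum_{j'\ge\ell_{\min}}|\hat g(A_{j'}^{-1}S_s^{-T}\xi)|^2$ obtained from $\hat G_s$ in \eqref{eq:defiGs} by discarding the coarse scales $j_0\le j'<\ell_{\min}$. Consequently the difference $\psi_{\lambda}-\psi^{\sharp}_{\lambda}$ has Fourier transform $F_s(\xi)\,\hat\psi_{j,s,m,p}(\xi)$ with $F_s(\xi)=\sum_{j'=j_0}^{\ell_{\min}-1}|\hat g(A_{j'}^{-1}S_s^{-T}\xi)|^2$, i.e.\ it carries precisely the coarse-scale tail that was removed. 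I note first that if $\ell_{\min}=j_0$ the sum defining $F_s$ is empty, so $\psi_{\lambda}=\psi^{\sharp}_{\lambda}$ and both claims are trivial; hence I may assume $\ell_{\min}=\lfloor j(1-\rho)\rfloor>j_0$, in which case every tail index satisfies $j-j'\ge j\rho+1$.

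For part (i) I would argue on supports. Since $g$ is compactly supported, $|\hat g|^2$ is the Fourier transform of the compactly supported function $g*\bar g$, so the $j'$-th term of $\tilde G_s$ is supported in $S_s^{-1}A_{j'}^{-1}[-C_1,C_1]^2$; these boxes are nested and the largest corresponds to the smallest admissible scale $j'=\ell_{\min}$, giving $\supp(\tilde G_s)\subset S_s^{-1}A_{\ell_{\min}}^{-1}[-C_1,C_1]^2$. Together with $\supp(\psi_{j,s,m,p})\subset S_s^{-1}A_j^{-1}([-C_2,C_2]^2+D_pm)$ and $\supp(u*v)\subset\supp(u)+\supp(v)$, I factor out $S_s^{-1}A_j^{-1}$ and use $A_jA_{\ell_{\min}}^{-1}=\mathrm{diag}(2^{j-\ell_{\min}},2^{\lfloor j/2\rfloor-\lfloor\ell_{\min}/2\rfloor})$. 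From $j-\ell_{\min}\le j\rho+1$ one gets $2^{j-\ell_{\min}}\lesssim 2^{j\rho}$ and $2^{\lfloor j/2\rfloor-\lfloor\ell_{\min}/2\rfloor}\lesssim 2^{j\rho/2}$, so the transported filter box lies in $[-C2^{j\rho},C2^{j\rho}]\times[-C2^{j\rho/2},C2^{j\rho/2}]$; the $O(1)$ box coming from $\psi_{j,s,m,p}$ is absorbed since $2^{j\rho}\ge1$, yielding the claimed anisotropic box (centred at $D_pm$, written $m$ in the statement).

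For part (ii), Cauchy--Schwarz reduces the claim to $\|\psi_{\lambda}-\psi^{\sharp}_{\lambda}\|_2\lesssim 2^{-\frac{j}{2}\rho\alpha}$. By Plancherel and the identity $\hat\psi_{j,s,m,p}(\xi)=|\det A_j|^{-1/2}e^{-2\pi i\langle D_pm,A_j^{-1}S_s^{-T}\xi\rangle}\hat\psi^p(A_j^{-1}S_s^{-T}\xi)$, the substitution $\eta=A_j^{-1}S_s^{-T}\xi$ turns the squared norm into $\int_{\R^2}F(\eta)^2|\hat\psi^p(\eta)|^2\,d\eta$ with $F(\eta)=\sum_{j'=j_0}^{\ell_{\min}-1}|\hat g(A_{j'}^{-1}A_j\eta)|^2$, whose first argument is dilated by $2^{j-j'}\ge2^{j\rho}$. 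The decay bound \eqref{eq:decay_g}, after summing the resulting geometric series in $j'$, gives $F(\eta)\lesssim1$ everywhere and $F(\eta)\lesssim 2^{-2j\rho\beta}|\eta_1|^{-2\beta}$ once $|\eta_1|>2^{-j\rho}$, so $F$ is essentially concentrated in $|\eta_1|\lesssim 2^{-j\rho}$. The gain then comes from the order-$\alpha$ vanishing of $\hat\psi_1$ at the origin in \eqref{eq:decay_varphi_1}, i.e.\ $|\hat\psi_1(\eta_1)|\lesssim|\eta_1|^\alpha$ for $|\eta_1|\le1$: on $|\eta_1|\le 2^{-j\rho}$ this already forces $|\hat\psi_1(\eta_1)|^2\lesssim 2^{-2j\rho\alpha}$, while on $|\eta_1|>2^{-j\rho}$ the strong decay of $F$ together with $\beta>\alpha+1$ makes the tail integral even smaller. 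Separating the harmless $\eta_2$-integration (which contributes only a constant uniformly in $p$, since $\int|\hat\psi_1(2^{-(p-1)}\eta_2)|^2\,d\eta_2\sim 2^{p-1}$ cancels the prefactor $2^{-(p-1)}$ in $\psi^p$), one obtains $\int F^2|\hat\psi^p|^2\lesssim 2^{-j\rho\alpha}$, which is the required estimate.

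The main obstacle is the frequency bookkeeping in part (ii): one must quantify precisely how the removed coarse-scale mass $F$ sits at low $\eta_1$-frequency (scale $2^{-j\rho}$) and match it against the low-frequency vanishing of the generator, splitting the $\eta_1$-line into $|\eta_1|\lesssim 2^{-j\rho}$ and $|\eta_1|\gtrsim 2^{-j\rho}$ and controlling each against $|\hat\psi^p|^2$ uniformly in $p$; the roles of $\alpha$ (vanishing order) and $\beta$ (decay, with $\beta>\alpha+1$) must be balanced so that neither region dominates. Part (i) is geometrically routine once the truncated-filter picture is in place, the only care being that $S_s$ does not commute with the parabolic scaling, so the Minkowski sum of the two sheared anisotropic boxes must be factored through the common $S_s^{-1}A_j^{-1}$.
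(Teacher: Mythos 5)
Your proposal is correct and follows essentially the same route as the paper: Cauchy--Schwarz plus a Plancherel estimate of $\|\psi_{\lambda}-\psi^{\sharp}_{\lambda}\|_2$, in which the removed coarse-scale filter tail (concentrated at $|\xi_1|\lesssim 2^{j(1-\rho)}$ in the original variables) is played off against the order-$\alpha$ vanishing of $\hat\psi_1$ at the origin to produce the factor $2^{-j\rho\alpha}$ for the squared norm. The only differences are organizational: you spell out the support computation for (i), which the paper dismisses as immediate from the definition (cf.\ Lemma \ref{lem:compact}), and in (ii) you change variables and split dyadically at $|\eta_1|=2^{-j\rho}$ where the paper estimates term by term in $j'$ via \eqref{eq:inequality}; both yield the same bound.
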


The second key condition is a directional vanishing moment condition, which can be shown to be fulfilled by the generators
$\Theta_{j-\max\{\lfloor j(1-\rho) \rfloor,j_0\}}*\psi^p$ for $\psi^{\sharp}_{\lambda}$. In fact, following the same argument in the proof
of Proposition \ref{prop:comparison}(ii), we see that
\[
\psi^{\sharp}_{\lambda} = |\mbox{det}(A_j)|^{1/2}(\Theta_{j-\max\{\lfloor j(1-\rho) \rfloor,j_0\}}*\psi^p)(S_kA_j \cdot - D_p m)
\]
for $\lambda = (j,s,m,p) \in \Lambda_s$, $s = \WQ{s(\lceil j_0 /2\rceil,q_0)} \in \mathbb{S}$, $j \ge j_0$ and $k \in \Z$ with $\WQ{s = \tfrac{k}{2^{\lceil j/2 \rceil}}}$.
The proof of the following result is provided in Subsection \ref{subsec:technical_4}.

\begin{lemma}\label{lemm:technical_4}
For all $j \ge j_0$, $p \ge 0$, $\ell = 0,1$ and \WQ{$\gamma \in [1,\alpha)$},
\[
\Bigl|\Bigl(\frac{\partial}{\partial \xi_2}\Bigr)^{\ell}\widehat{(\Theta_{j-\max\{\lfloor j(1-\rho) \rfloor,j_0\}}*\psi^p)}(\xi)\Bigr|
\lesssim \frac{\min\{1,|\xi_1|\}^{\alpha-1}}{(1+|\xi_1|)^{\beta-\gamma}(1+|\xi_2|)^{\gamma}}.
\]
\end{lemma}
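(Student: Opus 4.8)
The plan is to work entirely on the Fourier side, where the convolution becomes the product $\widehat{\Theta_{\ell}*\psi^p}=\hat\Theta_{\ell}\,\hat\psi^p$, writing $\ell:=j-\max\{\lfloor j(1-\rho)\rfloor,j_0\}$ for the filter index (and reserving $\ell'\in\{0,1\}$ for the order of the $\xi_2$-derivative appearing in the statement). Since only $\ell'=0,1$ are needed, I would first apply the Leibniz rule, reducing the claim to four pointwise estimates against the right-hand side: bounds for $\hat\Theta_\ell$ and $\partial_{\xi_2}\hat\Theta_\ell$, and for $\hat\psi^p$ and $\partial_{\xi_2}\hat\psi^p$. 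The decisive structural point is that the target RHS carries no dependence on $j$ (hence on $\ell$) nor on $p$, so every estimate has to be proved uniformly in these parameters.

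For the generator factor I would use the explicit product structure $\hat\psi^0(\xi)=\hat\psi_1(\xi_1)\hat\varphi_1(\xi_2)$ and, for $p>0$, $\hat\psi^p(\xi)=2^{-(p-1)/2}\hat\psi_1(\xi_1)\hat\psi_1(2^{-(p-1)}\xi_2)$, together with the decay hypotheses \eqref{eq:decay_varphi_1}; since $\hat\psi_1'$ obeys the same bound as $\hat\psi_1$, the $\xi_2$-derivative is handled identically up to a harmless extra factor $2^{-(p-1)}$. This already supplies the full $\min\{1,|\xi_1|^\alpha\}/(1+|\xi_1|)^\beta$ behaviour in $\xi_1$, but in $\xi_2$ it produces only a bump of height $\sim 2^{-(p-1)/2}$ centred at $|\xi_2|\sim 2^{p-1}$, which for large $p$ is far too weak to yield $(1+|\xi_2|)^{-\gamma}$ on its own. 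The missing $\xi_2$-decay must therefore be supplied by $\hat\Theta_\ell$.

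For the filter factor I would insert the series $\hat\Theta_\ell(\xi)=\sum_{j'\ge-\ell}|\hat g(A_{j'}^{-1}\xi)|^2$, use $A_{j'}^{-1}\xi=(2^{-j'}\xi_1,2^{-\lfloor j'/2\rfloor}\xi_2)$ and the chain rule $\partial_{\xi_2}[\hat g(A_{j'}^{-1}\xi)]=2^{-\lfloor j'/2\rfloor}(\partial_{\xi_2}\hat g)(A_{j'}^{-1}\xi)$, and then estimate term by term via \eqref{eq:decay_g}. The heart of the argument is an anisotropic trading estimate carried out scale by scale: from the $\xi_2$-factor $(1+2^{-\lfloor j'/2\rfloor}|\xi_2|)^{-2\beta}$ I would extract the factor $(1+|\xi_2|)^{-\gamma}$ at the cost of a power $2^{\gamma\lfloor j'/2\rfloor}$, which—since the effective scale satisfies $2^{j'}\sim|\xi_1|$, so $2^{\lfloor j'/2\rfloor}\sim|\xi_1|^{1/2}$—is absorbed by the surplus in the $\xi_1$-decay, lowering the $\xi_1$-exponent to $\beta-\gamma$ as in the statement. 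Combining this with the order-$\alpha$ vanishing of $\hat\psi_1(\xi_1)$ and the additional vanishing of $\hat\Theta_\ell$ near $\xi_1=0$ produces the numerator $\min\{1,|\xi_1|\}^{\alpha-1}$, the clean exponent $\alpha-1$ leaving slack to absorb the borderline scales. Summing the resulting geometric series in $j'$ around the dominant scale $j'_{\ast}\sim\log_2|\xi_1|$ closes the estimate; convergence of this series is exactly where the hypothesis $\gamma<\alpha$ enters, and the sum being governed by scales with $2^{j'}\sim|\xi_1|$ is what makes the bound uniform in $\ell$ and $p$.

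I expect the main obstacle to be this per-scale trading together with its uniformity. One has to perform a genuine case analysis according to the relative sizes of $|\xi_1|$, $|\xi_2|$, $2^{j'}$ and $2^{p}$—in particular the regime where $\hat\psi^p$ peaks far out, $|\xi_2|\sim 2^{p-1}\gg|\xi_1|^{1/2}$, where the required $\xi_2$-decay can only come from off-diagonal scales $j'$ of $\hat\Theta_\ell$—and check that in every regime the per-scale bound is dominated by the target RHS times a summable factor $2^{-c|j'-j'_{\ast}|}$. Keeping all constants independent of $j$, $\ell$ and $p$ while balancing the $\xi_1$–$\xi_2$ decay budget is the technical crux; the $\ell'=1$ derivative and the case $p=0$ are then routine variants once the $p>0$, $\ell'=0$ case has been settled.
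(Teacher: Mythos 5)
Your plan coincides with the paper's own proof: both pass to the Fourier side, expand $\hat\Theta$ as a sum of $|\hat g(A_{j'}^{-1}\cdot)|^2$ over scales, apply the Leibniz rule for the $\xi_2$-derivative, and then close the estimate by the same anisotropic trading of surplus $\xi_1$-decay (and the order-$\alpha$ vanishing, reduced to $\alpha-1$) against the needed $(1+|\xi_2|)^{-\gamma}$, summing the resulting geometric series in $j'$ with $\gamma<\alpha$ ensuring convergence. This is essentially the paper's argument, so you may proceed as outlined.
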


One main last ingredient, which we state as a lemma before providing the complete proof of Theorem \ref{thm:sparsity}, are
decay rate of the shearlet coefficients $\langle f,\psi_{\lambda}\rangle$ for cartoon-like functions, where we now
carefully insert conditions related to the functions $\psi^{\sharp}_{\lambda}$. Again, the proof can be found in
Subsection \ref{subsec:technical_5}.

\begin{lemma}\label{lemm:technical_5}
Assume that $f \in \mathcal{E}^2(\R^2)$ with $C^2$ discontinuity curve given by $x_1 = E(x_2)$. For $\psi_{\lambda} \in \mathcal{SH}(\varphi_1,\psi_1;g)$
with $\lambda = (j,s,m,p) \in \Lambda_s$, $s = \WQ{s(\lceil j_0 /2\rceil,q_0)} \in \mathbb{S}$ and $j \ge j_0$, let $\psi^{\sharp}_{\lambda} \in L^2(\R^2)$ be defined as in
Lemma \ref{lemm:technical_4}. Let $\hat{x}_2 \in \R$ so that $(E(\hat{x}_2),\hat{x}_2) \in \mathrm{supp}(\psi^{\sharp}_{\lambda})$ and
$\hat s = E^{'}(\hat{x}_2)$. Also let $k_s \in \Z$ so that \WQ{$s = \tfrac{k_s}{2^{\lceil j/2 \rceil}}$}. Then the following hold.
\begin{itemize}
\item[(i)] If $|\hat s| \leq 3$, then
\[|\langle f,\psi_{\lambda}\rangle| \lesssim \min\Bigl\{2^{-\frac{3}{4}j}, \frac{2^{-\frac{3}{4}j}2^{3\rho j}}{|k_s + 2^{\WQ{\lceil j/2 \rceil}}\hat{s}|^3}\Bigl\}.\]
\item[(ii)] If $|\hat{s}| > \frac{3}{2}$, then
\[
|\langle f,\psi_{\lambda}\rangle| \lesssim 2^{3\rho j}2^{-\frac{9}{4}j}.
\]
\end{itemize}
\end{lemma}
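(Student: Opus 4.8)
The plan is to isolate the elementary size bound from the refined, directional estimate. The bound $|\langle f,\psi_\lambda\rangle|\lesssim 2^{-\frac34 j}$ in part~(i) is free of charge: cartoon-like functions are bounded, so $\|f\|_\infty\lesssim 1$, and Lemma~\ref{lemm:technical_2}(i) delivers precisely this rate. For all the remaining bounds I would first replace $\psi_\lambda$ by its localized surrogate $\psi^\sharp_\lambda$. By Lemma~\ref{lemm:technical_3}(ii) the error satisfies $|\langle f,\psi_\lambda-\psi^\sharp_\lambda\rangle|\lesssim 2^{-\frac{j}{2}\rho\alpha}\|f\|_2$, and since $\alpha\ge\frac{6}{\rho}+1$ this is $\lesssim 2^{-3j}$, which is dominated by every rate claimed in the lemma. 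Hence it suffices to estimate $\langle f,\psi^\sharp_\lambda\rangle$, whose generator (which I abbreviate $\Theta_{\bullet}*\psi^p$, with $\Theta_{\bullet}=\Theta_{j-\max\{\lfloor j(1-\rho)\rfloor,j_0\}}$) obeys the directional decay of Lemma~\ref{lemm:technical_4} and whose support is the thin anisotropic parallelogram of Lemma~\ref{lemm:technical_3}(i).

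Next I would straighten the picture via the change of variables $y=S_kA_jx$, under which $\langle f,\psi^\sharp_\lambda\rangle=|\det A_j|^{-1/2}\langle F,\Theta_{\bullet}*\psi^p\rangle$ with $F(y)=f(A_j^{-1}S_k^{-1}(y+D_pm))$, and the discontinuity curve $x_1=E(x_2)$ turns into a curve in the $y$-plane. Taylor expanding $E$ about $\hat x_2$ replaces it, to leading order, by the straight line $y_1=\kappa y_2+\mathrm{const}$ with slope $\kappa:=k_s+2^{\lceil j/2\rceil}\hat s=2^{\lceil j/2\rceil}(s+\hat s)$; thus $\kappa$ is exactly the rescaled angular mismatch between the shearlet direction $s$ and the edge slope $\hat s$. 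The bounded curvature of $\partial B$ controls the deviation of the true curve from this tangent by $\tfrac12\|E''\|_\infty(\mathrm{diam}_{x_2}\,\mathrm{supp}\,\psi^\sharp_\lambda)^2\lesssim 2^{-j(1-\rho)}$, which is of the same order as the $x_1$-width of $\mathrm{supp}\,\psi^\sharp_\lambda$, so the tangent approximation is legitimate and the quadratic remainder is absorbable. Writing $f=f_0+f_1\chi_B$ and separating the jump across $\partial B$ from the smooth remainder, the latter is of lower order by the regularity of $f_0,f_1$ together with the decay of $\psi^\sharp_\lambda$; the coefficient is therefore governed by the bounded jump of $f$ across the tangent line.

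The heart of the matter, and the step I expect to be hardest, is the straight-edge estimate, for which I would pass to the Fourier domain in the $y$-variable (the frequency $\xi$ of Lemma~\ref{lemm:technical_4}). The jump across $y_1=\kappa y_2+\mathrm{const}$ has a transform concentrated along the normal direction $(1,-\kappa)$ with a density decaying like the one-dimensional Heaviside transform, while $\widehat{\Theta_{\bullet}*\psi^p}$ is frequency-localized with $|\xi_1|\sim 1$; evaluating the transverse decay $(1+|\xi_2|)^{-\gamma}$ of Lemma~\ref{lemm:technical_4} with $\gamma=3$ at the resulting height $|\xi_2|\sim|\kappa|$ then yields the factor $(1+|\kappa|)^{-3}=|k_s+2^{\lceil j/2\rceil}\hat s|^{-3}$, and the parabolic Jacobian $|\det A_j|^{-1/2}\sim 2^{-\frac34 j}$ supplies the base rate. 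Carefully tracking the $2^{j\rho}$-enlargement of $\mathrm{supp}\,\psi^\sharp_\lambda$ from Lemma~\ref{lemm:technical_3}(i) through these integrals accounts for the surplus factor $2^{3\rho j}$, and the estimate comes out uniform in $p$ because Lemma~\ref{lemm:technical_4} is. The delicate points are keeping the singular pairing convergent at the origin (guaranteed by the numerator $\min\{1,|\xi_1|\}^{\alpha-1}$, i.e.\ by the vanishing moments encoded in $\alpha$) and checking that the curvature remainder and the smooth pieces stay below the stated rate. Combining this with the generic bound $2^{-\frac34 j}$ from the first paragraph produces the minimum asserted in~(i).

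Finally, part~(ii) is a corollary of this refined estimate once the geometry is read off. If $|\hat s|>\tfrac32$, then because $|s|\le 1$ (equivalently $|k_s|\le 2^{\lceil j/2\rceil}$) we get $|\kappa|=2^{\lceil j/2\rceil}|s+\hat s|\ge 2^{\lceil j/2\rceil}(|\hat s|-|s|)\ge\tfrac12\,2^{\lceil j/2\rceil}\gtrsim 2^{j/2}$: no shearlet of the horizontal cone can align with so steep an edge. Substituting $|\kappa|\gtrsim 2^{j/2}$ into the refined bound of~(i) gives $2^{-\frac34 j}\,2^{3\rho j}\,2^{-\frac32 j}=2^{3\rho j}2^{-\frac94 j}$, exactly the claimed rate; these steep edges are, as expected, the ones resolved by the complementary vertical-cone elements $\psi^1_\lambda$ that we have set aside.
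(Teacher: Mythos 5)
Your proposal follows essentially the same route as the paper: the paper likewise reduces to the localized elements $\psi^{\sharp}_{\lambda}$ via Lemma \ref{lemm:technical_3}(ii), verifies the support and directional-decay hypotheses through Lemma \ref{lemm:technical_3}(i) and Lemma \ref{lemm:technical_4}, imports the straight-edge estimates wholesale from the proof of Proposition 2.2 in \cite{KL11} (rather than re-deriving the tangent-line/Fourier argument as you sketch), and combines the result with the trivial bound of Lemma \ref{lemm:technical_2}(i). The one point to flag is that you obtain part (ii) by substituting $|k_s+2^{\lceil j/2\rceil}\hat s|\gtrsim 2^{j/2}$ into the refined bound of part (i), which is only established under the hypothesis $|\hat s|\le 3$; this covers $\tfrac32<|\hat s|\le 3$, but for $|\hat s|>3$ the paper (following \cite{KL11}) invokes a separate estimate for steep edges, so your shortcut leaves that range formally unjustified even though the underlying intuition is the same.
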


After these strategic discussions, we are now ready to present the proof of Theorem \ref{thm:sparsity}.

\begin{proof}[Proof of Theorem \ref{thm:sparsity}]
We start by defining dyadic cubes $Q_{j,\ell} \subseteq [0,1]^2$ for $j \ge 0$ and $\ell \in \Z^2$ by setting
\[
Q_{j,\ell} := \WQ{2^{-\lfloor j/2 \rfloor}}[0,1]^2+2^{\WQ{-\lfloor j/2 \rfloor}}\ell.
\]
The set of dyadic cubes intersecting the discontinuity curve $\Gamma$ of $f \in \mathcal{E}^2(\R^2)$ is then given by
\[
\mathcal{Q}_j = \{Q_{j,\ell} : \mathrm{int}(Q_{j,\ell}) \cap \Gamma \neq \emptyset\},
\]
where $\mathrm{int}(Q_{j,\ell})$ is the interior set of $Q_{j,\ell}$.

Next, without loss of generality, we assume that the discontinuity curve $\Gamma$ is given by $x_1 = E(x_2)$ with
$E \in C^2([0,1])$. In fact, for sufficiently large $j$, the discontinuity $\Gamma$ can be expressed as either $x_1 = E(x_2)$
or $x_2 = \tilde E(x_1)$ within $Q_{j,\ell} \in \mathcal{Q}_j$. Hence the same arguments can be applied for $x_2 = \tilde E(x_1)$
except for switching the order of variables. For each $Q_{j,\ell} \in \mathcal{Q}_j$, let now $E_{j,\ell}$ to be a $C^2$ function such that
\[
\Gamma \cap \mathrm{int}(Q_{j,\ell}) = \{(x_1,x_2) \in \mathrm{int}(Q_{j,\ell}) : x_1 = E_{j,\ell}(x_2)\}.
\]
This allows us to defined
\[
\mathcal{Q}^0_j := \{ Q_{j,\ell} \in \mathcal{Q}_j : \|E^{'}_{j,\ell}\|_{\infty} \leq 3 \}
\]
and
\[
\mathcal{Q}^1_j := \mathcal{Q}_j \cap (\mathcal{Q}^0_j)^c.
\]
Notice that, for all $E_{j,\ell}$ associated with $\mathcal{Q}_{j,\ell} \in Q^1_j$, we may assume
\begin{equation*}
\inf_{(x_1,x_2) \in \mathrm{int}(Q_{j,\ell})}|E^{'}_{j,\ell}(x_2)| > 3/2
\end{equation*}
for sufficiently large $j$.

We further define the orientation of the discontinuity curve $\Gamma$ in each dyadic cube $Q_{j,\ell}$ by
\begin{equation*}
\hat{s}_{j,\ell} = E^{'}_{j,\ell}(\hat{x}_2) \quad \text{for some} \,\, (E_{j,\ell}(\hat{x}_2),\hat{x}_2) \in \mathrm{int}(Q_{j,\ell}) \cap \Gamma.
\end{equation*}
Moreover, for any $J > 0$, we define $\mathbb{S}_{J/2}$ as a finite subset of $\mathbb{S}$ by
\[
\mathbb{S}_{J/2} = \{s(j,q) = 0 : j = 0, q = 0\} \cup \{s(\WQ{\lceil j/2\rceil},q) : 0 \leq j \leq J,\, |q| \leq 2^{\WQ{\lceil j/2 \rceil}}, \, q \in 2 \Z +1\}.
\]
Finally, let $k_{j,s} \in \Z$ be chosen so that
\[
s = \frac{k_{j,s}}{\WQ{2^{\lceil j/2 \rceil}}} \quad \mbox{for } s = \WQ{s(\lceil j_0 /2\rceil,q_0)}\in S_{J/2} \mbox{ and } j \ge j_0.
\]

We will now consider the two cases, namely when the shearlets intersect the discontinuity curve of $f$ or not
separately. For this, we define subsets $\Lambda^0$ and $\Lambda^1$ of the general index set $\Lambda =\{0,1\} \times \bigcup_{s \in \mathbb{S}} \Lambda_s$ by
\[
\Lambda^0 = \{\lambda \in \Lambda : \mathrm{int}(\supp(\psi^{\sharp}_{\lambda}))\cap \Gamma \neq \emptyset\} \quad \text{and} \quad \Lambda^1 = \Lambda \cap (\Lambda^0)^c.
\]
The smooth part, i.e., shearlet coefficients not intersecting the discontinuity curve $\Gamma$, can now be handled quickly.
Following the proof of
Proposition 2.1 in \cite{KL11}, for the approximated shearlet elements $\psi^{\sharp}_{\lambda}$ defined in Lemma \ref{lemm:technical_3}, one can show
that
\[
\sum_{\lambda \in (\WQ{\Lambda_N})^c \cap \Lambda^1} |\langle f,\psi^{\sharp}_{\lambda}\rangle|^2 \lesssim 2^{-J(1-\frac{13}{2}\rho)}
\]
with $N \sim J 2^{\frac{J(1+\rho)}{2}}$ as $J \rightarrow \infty$. By Lemma \ref{lemm:technical_3}(ii), this implies
\begin{equation}\label{eq:smooth}
\sum_{\lambda \in (\WQ{\Lambda_N})^c \cap \Lambda^1} |\langle f,\psi_{\lambda}\rangle|^2 \lesssim 2^{-J(1-\frac{13}{2}\rho)} \quad \mbox{as } J \to \infty.
\end{equation}

We now turn to analyze shearlets corresponding to $\Lambda^0$, aiming to prove that, again for $N \sim J 2^{\frac{J(1+\rho)}{2}}$,
\begin{equation}\label{eq:nonsmooth}
\sum_{\lambda \in (\WQ{\Lambda_N})^c \cap \Lambda^0} |\langle f,\psi_{\lambda}\rangle|^2 \lesssim 2^{-J(1-\frac{13}{2}\rho)} \quad \mbox{as } J \to \infty.
\end{equation}
For this, we fix some $J \ge 0$. Then we define subsets $\Lambda^0_j$, $j \ge 0$, of $\Lambda^0$ by
\begin{equation}\label{eq:index01}
\Lambda^0_j := \{\lambda = (j^{'},s,m,p)\in \Lambda^0 : j^{'} = j \}.
\end{equation}
Notice that $\Lambda^0 = \bigcup_{j = 0}^{\infty} \Lambda^0_j$. Further, for $r = 0,1$, define again subsets of
those sets by
\begin{eqnarray} \nonumber
\Lambda^{0}_{j,r} & := & \Bigl\{\lambda = (j,s,m,p) \in \Lambda^0_j : \mathrm{int}(\supp(\psi^{\sharp}_{\lambda})) \cap \mathrm{int}(Q_{j,\ell}) \cap \Gamma \neq \emptyset, \\ \label{eq:index02}
& & \hspace*{6cm}  \text{for} \,\, p \leq \max\Bigl(\tfrac{j\rho}{2},\tfrac{J\rho}{2}\Bigr), Q_{j,\ell} \in \mathcal{Q}^{r}_j\Bigr\}
\end{eqnarray}
corresponding to areas in which the discontinuity curve has a certain slope.

We further aim to collect all indices from the sets $\Lambda^0_j$ which correspond to significant shearlet coefficients. We might
overestimate at this point in the sense of also collecting indices corresponding to small shearlet coefficients; but it will turn
out in the end that this more or less crude collection is sufficient for deriving the anticipated sparse approximation behavior.
The first set for this purpose extracts such indices, which are related to the set \WQ{$\mathcal{Q}^0_j$}, by choosing
\[
\tilde{\Lambda}^0_j := \Bigl\{\lambda = (j,s,m,p) \in \Lambda^0_j : p \leq \tfrac{J\rho}{2} \Bigr\} \quad \mbox{for } j = 0,1,\dots,\WQ{\lceil J/4 \rceil}-1
\]
and
\begin{eqnarray*}
\tilde{\Lambda}^0_j &:=& \Bigl\{\lambda = (j,s,m,p) \in \Lambda^0_j : \mathrm{int}(\supp(\psi^{\sharp}_{\lambda})) \cap \mathrm{int}(Q_{j,\ell}) \cap \Gamma \neq \emptyset \\ \nonumber
&& \hspace*{2cm} \text{and } |k_{j,s} + \WQ{2^{\lceil j/2 \rceil}}\hat{s}_{j,\ell}| \leq 2^{\frac{J-j}{4}} \,\, \text{for} \,\, Q_{j,\ell} \in \WQ{\mathcal{Q}^0_j} \,\, \text{and} \,\, p \leq \tfrac{J\rho}{2}\Bigr\}
\quad \mbox{for } j \ge \WQ{\lceil J/4 \rceil}.
\end{eqnarray*}
Similar considerations lead to the following selection related to the set \WQ{$\mathcal{Q}^1_j$}:
\[
\tilde{\Lambda}^1_j := \Bigl\{\lambda = (j,s,m,p) \in \Lambda^0_j : \mathrm{int}(\supp(\psi^{\sharp}_{\lambda})) \cap \mathrm{int}(Q_{j,\ell}) \cap \Gamma \neq \emptyset \,\,
\text{for} \,\, Q_{j,\ell} \in \mathcal{Q}^1_j \,\, \text{and} \,\, p \leq \tfrac{J\rho}{2}\Bigr\}.
\]
Finally, we define $\tilde{\Lambda}^0$ as a set of indices $\lambda \in \Lambda$ containing all significant shearlet coefficients $\langle f,\psi_{\lambda}\rangle$
(and presumably also others) as follows:
\begin{equation}\label{eq:subindex}
\tilde{\Lambda}^0 := \Bigl( \bigcup_{j = 0}^{J} \tilde{\Lambda}^0_j\Bigr) \bigcup \Bigl(\WQ{\bigcup_{j = \lceil J/4 \rceil}^{\lceil J/3 \rceil}} \tilde{\Lambda}^1_j\Bigr).
\end{equation}

We now turn to estimate $\#(\tilde{\Lambda}^0)$. Using the same argument as in \cite{KL11}(page 19), for $s \in \mathbb{S}$, $j \ge 0$ fixed
and each $Q_{j,\ell} \in \mathcal{Q}^0_j$, we obtain
\begin{eqnarray}\nonumber
\lefteqn{\hspace*{-1cm}\#\bigl( \Bigl\{\lambda = (j,s,m,p) \in \WQ{\Lambda^0_j} : \mathrm{int}(\supp(\psi^{\sharp}_{\lambda})) \cap \mathrm{int}(Q_{j,\ell}) \cap \Gamma \neq \emptyset \,\,
\text{for}\,\, p \leq\tfrac{J\rho}{2}\Bigr\}  \bigr)}\\ \label{eq:count01}
& \hspace*{9cm} \lesssim & 2^{\frac{J\rho}{2}}(1+|\hat{k}_{j,\ell}(s)|),
\end{eqnarray}
where $\hat{k}_{j,\ell}(s) = k_{j,s} + \WQ{2^{\lceil j/2 \rceil}}\hat{s}_{j,\ell}$ and the additional factor $2^{\frac{J\rho}{2}}$ comes from
oversampling parameter $p$ associated with the sampling matrix $D_p$ in \eqref{eq:formclassical} for $\lambda = (j,s,m,p)$.
Also, for $p \in \N_0$ and $j \ge 0$ fixed, it is immediate that
\begin{equation}\label{eq:count02}
\#(\{\lambda = (j^{'},s,m,p^{'}) \in \Lambda^0_j : j^{'} = j \,\,\text{and}\,\,p^{'} = p\}) \lesssim 2^{2j}2^p \quad \mbox{for } j = 0,\dots,\WQ{\lceil J/4 \rceil}-1.
\end{equation}
Finally, we obtain
\begin{equation}\label{eq:count03}
\#(\{\lambda = (j^{'},s,m,p^{'}) \in \Lambda^0_{j,r} : j^{'} = j \,\,\text{and}\,\,p^{'} = p\}) \lesssim 2^{\frac{3}{2}j}2^p  \quad \mbox{for } j \ge 0, \; r=0,1,
\end{equation}
from arguing as follows: There exist at most about $2^{j+p}$ shearlets $\psi_{\lambda}$ whose approximated part $\psi^{\sharp}_{\lambda}$ intersects $\Gamma$
for $\lambda = (j,s,m,p)$ with fixed $j$, $s$, and $p$. Also, if $\lambda = (j,s,m,p) \in \Lambda^0_j$, then $s \in \mathbb{S}_{j/2}$ and
$\#(\mathbb{S}_{j/2}) \lesssim 2^{j/2}$. Thus, in this case, there are about $2^{j+p}$ translates with respect to $m$ and $2^{j/2}$ shearings with respect
to $s$ yielding the estimate in \eqref{eq:count03}.

By \eqref{eq:count01}--\eqref{eq:count03}, we now derive an estimate for $\#(\tilde{\Lambda}^0)$ as follows:
\allowdisplaybreaks{
\begin{eqnarray}\label{eq:counting}
\#(\tilde{\Lambda}^0) &\lesssim& \sum_{j = 0}^{\WQ{\lceil J/4 \rceil}-1} \#(\tilde{\Lambda}^0_j) +
\sum_{j = \WQ{\lceil J/4 \rceil}}^{\WQ{\lceil J/3 \rceil}} \#(\tilde{\Lambda}^1_j) +
\sum_{j = \WQ{\lceil J/4 \rceil}}^{J} \#(\tilde{\Lambda}^0_j) \nonumber \\
&\lesssim& \sum_{j = 0}^{\WQ{\lceil J/4 \rceil}-1}(2^{2j}2^{\frac{J\rho}{2}})+ \sum_{j = \WQ{\lceil J/4 \rceil}}^{\WQ{\lceil J/3 \rceil}} (2^{\frac{3}{2}j})2^{\frac{J\rho}{2}}  \nonumber \\
& & + \sum_{j = \WQ{\lceil J/4 \rceil}}^{J}\sum_{\{\ell : Q_{j,\ell} \in \mathcal{Q}^0_j\}}\sum_{\{s \in \mathbb{S}_{j/2} : |\hat{k}_{j,\ell}(s)| \leq 2^{\frac{J-j}{4}}\}}
2^{\frac{J\rho}{2}} (1+(1+|\hat{k}_{j,\ell}(s)|) \nonumber \\
&\lesssim& 2^{\frac{J}{2}(1+\rho)} + 2^{\frac{J\rho}{2}}\sum_{j = J/4}^{J} \#(\mathcal{Q}^0_j)(2^{\frac{J-j}{2}}) \nonumber \\
&\lesssim& J2^{\frac{J}{2}(1+\rho)}.
\end{eqnarray}
}

Let now $N >0$ be given. Then we choose $J > 0$ such that $N \sim J2^{\frac{J}{2}(1+\rho)}$. Without loss of generality, we may assume
that $N \ge \#(\tilde{\Lambda}^0)$. Then we have
\begin{eqnarray*}
\sum_{\lambda \in \Lambda^0 \cap (\Lambda_N)^c} |\langle f,\psi_{\lambda} \rangle|^2 &\leq& \sum_{\lambda \in \Lambda^0 \cap (\tilde{\Lambda}^0)^c} |\langle f,\psi_{\lambda}\rangle|^2 \\
&\lesssim& \sum_{j = \WQ{\lceil J/4 \rceil}}^{J} \sum_{\lambda \in \Lambda^0_{j,0} \cap (\tilde{\Lambda}^0_j)^c} |\langle f,\psi_{\lambda}\rangle|^2
+ \sum_{j = \WQ{\lceil J/4 \rceil}}^{\WQ{\lceil J/3 \rceil}} \sum_{\lambda \in \Lambda^0_{j,1} \cap (\tilde{\Lambda}^1_j)^c} |\langle f,\psi_{\lambda}\rangle|^2 \\
& & + \sum_{j = \WQ{\lceil J/3 \rceil}+1}^{J} \sum_{\lambda \in \Lambda^0_{j,1}} |\langle f,\psi_{\lambda}\rangle|^2
+ \sum_{j = 0}^{J} \sum_{\lambda \in \Lambda^0_j \cap (\Lambda^0_{j,\WQ{0}} \cup \Lambda^0_{j,1})^c} |\langle f,\psi_{\lambda}\rangle|^2 \\
& & + \sum_{j \ge J} \sum_{\lambda \in \Lambda^0_j} |\langle f,\psi_{\lambda}\rangle|^2 \\
&=& \mathrm{(I)} + \mathrm{(II)} + \mathrm{(III)} + \mathrm{(IV)} + \mathrm{(V)}.
\end{eqnarray*}
For the second inequality above, we used the fact that $\tilde{\Lambda}^0_j = \Lambda^0_{j,0} \cup \Lambda^0_{j,1}$ for $j < \WQ{\lceil J/4 \rceil}$.

We now estimate (I) -- (V). For this, for each $s \in \mathbb{S}_{j/2}$ and $Q_{j,\ell} \in \mathcal{Q}^0_j$, let
\[
\hat{k}_{j,\ell}(s) = k_{j,s} + 2^{j/2}\hat{s}_{j,\ell}.
\]
We start with (I). Using Lemma \ref{lemm:technical_5}(i) and \eqref{eq:count01}, we obtain
\begin{eqnarray}\label{eq:estimate01}
\mathrm{(I)} &\lesssim& \sum_{j = \WQ{\lceil J/4 \rceil}}^{J} \sum_{\{\ell : Q_{j,\ell} \in \mathcal{Q}^0_j\}}
\sum_{\{s \in \mathbb{S}_{j/2} : |\hat{k}_{j\ell}(s)| > 2^{\frac{J-j}{4}}\}} 2^{\frac{J\rho}{2}}(1+|\hat{k}_{j,\ell}(s)|)
\Biggl(\frac{2^{3\rho j}2^{-\frac{3}{4}j}}{|\hat{k}_{j,\ell}(s)|^3}\Biggr)^2 \nonumber \\
&\lesssim&  \sum_{j = \WQ{\lceil J/4 \rceil}}^{J} \sum_{\{\ell : Q_{j,\ell} \in \mathcal{Q}^0_j\}} 2^{\frac{J\rho}{2}}2^{6\rho j}2^{-\frac{3}{2}j}(2^{-\frac{J-j}{4}})^{4} \nonumber \\
&\lesssim& \sum_{j = \WQ{\lceil J/4 \rceil}}^{J} (2^{j/2})2^{\frac{J\rho}{2}}2^{6\rho j}2^{-\frac{3}{2}j}(2^{-\frac{J-j}{4}})^{4} \lesssim 2^{-J(1-\frac{13}{2}\rho)}.
\end{eqnarray}

Second we turn to (II). For this, we notice that $\Lambda^0_{j,1} = \tilde{\Lambda}^1_j$ for $j \leq \WQ{\lceil J/3 \rceil}$.
But this immediately implies $\mathrm{(II)} = 0$.

To estimate (III), we use Lemma \ref{lemm:technical_5}(ii) and \eqref{eq:count03} to obtain
\beq \label{eq:estimate03}
\mathrm{(III)} \lesssim \sum_{j = \WQ{\lceil J/3 \rceil}}^{J} 2^{\frac{J\rho}{2}}(2^{\frac{3}{2}j})(2^{3\rho j}2^{-\frac{9}{4}j})^2
\lesssim \sum_{j = \WQ{\lceil J/3 \rceil}}^{J} 2^{\frac{J\rho}{2}}2^{-3j(1-2\rho)}
\lesssim 2^{\frac{J\rho}{2}}2^{-J(1-2\rho)} = 2^{-J(1-5/2\rho)}.
\eeq
For the third inequality, we used that $\rho < \frac{1}{2}$.

Term (IV) is estimated by using Lemma \ref{lemm:technical_2}(ii) and \eqref{eq:count02}. Using also $\alpha \ge \frac{6}{\rho}+1$,
we have
\begin{eqnarray}\label{eq:estimate04}
\mathrm{(IV)} &\lesssim& \sum_{j = 0}^{J}\sum_{p = 0}^{\infty}(2^{\frac{J\rho}{2}+p})(2^{2j})(2^{-\alpha(\frac{J}{4}\rho+\frac{p}{2})})^2 \nonumber \\
&\lesssim& \sum_{j = 0}^{J} \Bigl(\sum_{p = 0}^{\infty} 2^{-p(\alpha-1)}\Bigr) 2^{\frac{J\rho}{2}}2^{2j}2^{-\alpha\frac{J\rho}{2}} \nonumber \\
&\lesssim& \sum_{j = 0}^{J} 2^{2J-\alpha\frac{J\rho}{2}+\frac{J\rho}{2}} \lesssim 2^{-J}.
\end{eqnarray}

Finally, the last term can be estimated by
\begin{eqnarray}\nonumber
\mathrm{(V)} &\leq& \sum_{j \ge J} \sum_{\lambda \in \Lambda^0_{j,0}} |\langle f,\psi_{\lambda}\rangle|^2 + \sum_{j \ge J} \sum_{\lambda \in \Lambda^0_{j,1}}
|\langle f,\psi_{\lambda}\rangle|^2 + \sum_{j \ge J} \sum_{\lambda \in \Lambda_j \cap (\Lambda^0_{j,0} \cup \Lambda^0_{j,1})^c} |\langle f,\psi_{\lambda}\rangle|^2 \\ \label{eq:estimate10}
&=& \mathrm{(A)} + \mathrm{(B)} + \mathrm{(C)}.
\end{eqnarray}
It remains to analyze the terms (A) -- (C). We start with (A). By Lemmata \ref{lemm:technical_2}(i) and \ref{lemm:technical_5}(i) we well
as \eqref{eq:count01}, we obtain
\begin{eqnarray*}
\mathrm{(A)} &\lesssim& \sum_{j \ge J} \sum_{\{\ell : Q_{j,\ell} \in \mathcal{Q}^0_j\}}
2^{\frac{j\rho}{2}}\sum_{s \in \mathbb{S}_{j/2}}(1+|\hat{k}_{j,\ell}(s)|)\min\Bigl\{2^{-\frac{3}{4}j},\frac{2^{3\rho j}2^{-\frac{3}{4}j}}{|\hat{k}_{j,\ell}(s)|^3}\Bigr\}^2 \nonumber \\
&\lesssim&  \sum_{j \ge J} 2^{\frac{j\rho}{2}}(\#(\mathcal{Q}^0_j)2^{-\frac{3}{2}j}2^{6\rho j})  \nonumber \\
&\lesssim& \sum_{j \ge J} 2^{-j}2^{\frac{13 j\rho}{2}} \lesssim 2^{-J(1-\frac{13\rho}{2})}.
\end{eqnarray*}
The terms (B) and (C) can be estimated by using Lemmata \ref{lemm:technical_5}(ii) and \ref{lemm:technical_2}(ii) as well as equations \eqref{eq:count02} and
\eqref{eq:count03} to obtain
\[
\mathrm{(B)} \lesssim \sum_{j \ge J} (2^{\frac{3}{2}j})(2^{\frac{j\rho}{2}})(2^{-\frac{9}{4}j}2^{3\rho j})^2
= \sum_{j \ge J} 2^{-3j}2^{6\rho j}2^{\frac{j\rho}{2}} \lesssim 2^{-3J+\frac{13}{2}\rho J}.
\]
and
\[
\mathrm{(C)} \lesssim \sum_{j \ge J}\sum_{p = 0}^{\infty}(2^{\frac{j\rho}{2}+p})(2^{2j})(2^{-\alpha(j\frac{\rho}{4}+\frac{p}{2})})^2
= \sum_{j \ge J} \Bigl( \sum_{p = 0}^{\infty} 2^{-p(\alpha-1)}\Bigr) 2^{2j+\frac{j\rho}{2}-j\frac{\alpha\rho}{2}} \lesssim 2^{-J}.
\]
Thus, continuing \eqref{eq:estimate10},
\begin{equation}\label{eq:estimate05}
\mathrm{(V)} \lesssim 2^{-J(1-\frac{13\rho}{2})}.
\end{equation}
Summarizing, \eqref{eq:estimate01}---\eqref{eq:estimate05} imply \eqref{eq:nonsmooth}.

Finally, using \eqref{eq:smooth}, \eqref{eq:nonsmooth}, and the frame property of the shearlet system $\mathcal{SH}(\varphi_1,\psi_1; g)$,
we can conclude that
\[
\|f-f_N\|^2_2 \lesssim \sum_{\lambda \notin \Lambda_N} |\langle f,\psi_{\lambda}\rangle|^2 \lesssim 2^{-J(1-\frac{13\rho}{2})}
\]
with  $N \sim J2^{\frac{J(1+\rho)}{2}}$, which implies our claim.
\end{proof}

\section{Proofs of Preliminary Lemmata and Corollary \ref{cor:approx}}\label{sec:proofs}

\subsection{Proof of Lemma \ref{lemm:technical_1}}\label{subsec:technical_1}

We first observe that
\[
(\widehat{Q_{j-j_0} * \psi^p})(\xi) = \sum_{j^{'}=j_0-j}^{\infty} \hat{h}_{j',p}(\xi), \quad \mbox{where} \,\, \hat{h}_{j',p}(\xi) := |\hat g(A^{-1}_{j^{'}}\xi)|^2\hat{\psi}^p(\xi),
\]
with
\[
\mathrm{supp}(h_{j',p}) \subset A_{\max\{-j^{'},0\}}[-C,C]^2
\]
for some $C>0$.
We can then estimate $\|Q_{j-j_0} * \psi^p\|_1$ by
\begin{eqnarray*}
\|Q_{j-j_0} * \psi^p\|_1 &=& \int_{\R^2} \Bigl| \int_{\R^2} (\widehat{Q_{j-j_0} * \psi^p})(\xi)e^{2\pi i \ip{\xi}{x}} d\xi\Bigr|dx \\
&\lesssim& \sum_{j^{'} = 0}^{\infty}\int_{[-C,C]^2}\int_{\R^2}|\hat{h}_{j^{'},p}(\xi)|d\xi dx + \sum_{j^{'} = 0}^{j-j_0}\int_{A_{j^{'}}[-C,C]^2}\int_{\R^2} |\hat{h}_{-j^{'},p}(\xi)|d\xi dx \\
&=& \mathrm{(I)} + \mathrm{(II)}.
\end{eqnarray*}
We will use the following inequality to estimate (I) and (II).
Assume that $j_2 \ge j_1$ and $\beta > \alpha +1$ with $\alpha>0$.
Then
\begin{equation}\label{eq:inequality}
\int_{\R}\frac{2^{-j_1}\min\{1,|2^{-j_2}x|\}^{\alpha}}{(1+|2^{-j_1}x|)^{\beta}}dx \lesssim 2^{-\alpha(j_2-j_1)}.
\end{equation}
We are now ready to estimate (I) and (II). First, by \eqref{eq:decay_varphi_1}-\eqref{eq:decay_g}, we have
\begin{eqnarray*}
\mathrm{(I)} &\lesssim& \sum_{j = 0}^{\infty} \int_{[-C,C]^2}\int_{\R^2} \frac{\min\{1,|2^{-j}\xi_1|\}^{\alpha}\min\{1,|\xi_1|\}^{\alpha}
\min\{1,|2^{-p}\xi_2|\}^{\alpha}}{(1+|2^{-j}\xi_1|)^{\beta}(1+|2^{-j/2}\xi_2|)^{\beta}(1+|\xi_1|)^{\beta}(1+|2^{-p}\xi_2|)^{\beta}}d\xi dx \\
&\lesssim& \sum_{j = 0}^{2p-1} \int_{[-C,C]^2} \int_{\R}\frac{\min\{1,|2^{-j}\xi_1|\}^{\alpha}}{(1+|\xi_1|)^{\beta}}d\xi_1
\int_{\R} \frac{\min\{1,|2^{-p}\xi_2|\}^{\alpha}}{(1+|2^{-j/2}\xi_2|)^{\beta}}d\xi_2 dx \\
& & + \sum_{j = 2p}^{\infty}\int_{[-C,C]^2}\int_{\R} \frac{\min\{1,|2^{-j}\xi_1|\}^{\alpha}}{(1+|\xi_1|)^{\beta}} d\xi_1
\int_{\R}\frac{\min\{1,|2^{-p}\xi_2|\}^{\alpha}}{(1+|2^{-p}\xi_2|)^{\beta}}d\xi_2 dx \\
&\lesssim& \sum_{j = 0}^{2p-1} 2^{-j(\alpha-1)}2^{-\alpha(p-j/2)}2^{p} + \sum_{j = 2p}^{\infty}2^{-j(\alpha-1)}2^{p} \lesssim 2^{-p(\alpha-1)} + 2^{-p(2\alpha-3)} \leq 2.
\end{eqnarray*}
Second,
\allowdisplaybreaks{
\begin{eqnarray*}
\mathrm{(II)} &\lesssim& \sum_{j = 0}^{\infty}\int_{A_j[-C,C]^2}\int_{\R^2}
\frac{\min\{1,|2^j\xi_1|\}^{\alpha}\min\{1,|\xi_1|\}^{\alpha}\min\{1,|2^{-p}\xi_2|\}^{\alpha}}
{(1+|2^{j/2}\xi_2|)^{\beta}(1+|2^{j}\xi_1|)^{\beta}(1+|2^{-p}\xi_2|)^{\beta}}d\xi dx \\
&\lesssim& \sum_{j = 0}^{\infty} \int_{A_j[-C,C]^2}\int_{\R} \frac{\min\{1,|\xi_1|\}^{\alpha}}{(1+|2^{j}\xi_1|)^{\beta}}d\xi_1
\int_{\R} \frac{\min\{1,|2^{-p}\xi_2|\}^{\alpha}}{(1+|2^{j/2}\xi_2|)^{\beta}(1+|2^{-p}\xi_2|)^{\beta}} d\xi_2 dx \\
&\lesssim& \sum_{j = 0}^{\infty} \int_{A_j[-C,C]^2} 2^{-j}2^{-\alpha j}
\int_{\R} \frac{\min\{1,|2^{-p}\xi_2|\}^{2}}{(1+|2^{j/2}\xi_2|)^{2}(1+|2^{-p}\xi_2|)^{2}}
d\xi_2 dx \\
&\lesssim&
\sum_{j = 0}^{\infty} \int_{A_j[-C,C]^2} 2^{-j}2^{-\alpha j}
\int_{\R} \frac{2^{-j/2}}{(1+|\xi_2|)^{2}}
d\xi_2 dx \\
&\lesssim& \sum_{j = 0}^{\infty} 2^{\frac{3}{2} \cdot j} 2^{-j} 2^{-\alpha j} 2^{-j/2} \leq 2.
\end{eqnarray*}
}
Therefore, (I) and (II) are uniformly bounded, which implies the uniform boundedness of $\|Q_{j-j_0}*\psi^p\|_1$.

\subsection{Proof of Lemma \ref{lemm:technical_2}}\label{subsec:technical_2}

We start by proving (i). First, note that Proposition \ref{prop:comparison}(ii) implies the form
\begin{equation*}
\psi_{\lambda} = |\mathrm{det}(A_j)|^{1/2}(\Theta_{j-j_0}*\psi^p)(S_kA_j\cdot - \WQ{D_p}m),
\end{equation*}
where $\lambda = (j,s,m,p) \in \Lambda$ with $s = \WQ{s(\lceil j_0 /2\rceil,q_0)} \in \mathbb{S}$, $j \ge j_0$ and $k \in \Z$ with \WQ{$s = \frac{k}{2^{\lceil j/2 \rceil}}$}.
This allows us to estimate $|\langle f,\psi_{\lambda}\rangle|$ as follows:
\begin{eqnarray*}
|\langle f,\psi_{\lambda}\rangle| &\leq& |\mathrm{det}(A_j)|^{1/2}\int_{\R^2} |(\Theta_{j-j_0}*\psi^p)(S_kA_j x - \WQ{D_p}m)f(x)| dx \nonumber \\
&\leq& 2^{-\frac{3}{4}j} \int_{\R^2} |f(A^{-1}_jS^{-1}_k(y+\WQ{D_p}m))||(\Theta_{j-j_0}*\psi^p)(y)|dy \nonumber \\
&\leq& 2^{-\frac{3}{4}j}\|f\|_{\infty} \|\Theta_{j-j_0}*\psi^p\|_1.
\end{eqnarray*}
The claim in (i) now follows from Lemma \ref{lemm:technical_1}.

We next turn to proving (ii). Since by definition, $\psi_{\lambda} = G_s * \psi_{j,s,m,p}$, we have
\[
|\langle f,\psi_{\lambda}\rangle|^2 \leq \|\hat f\|^2_2 \|\hat{G}_s\cdot\hat{\psi}_{j,s,m,p}\|_2^2.
\]
Now we estimate $\|\hat{G}_s\cdot\hat{\psi}_{j,s,m,p}\|_2^2$ as follows.  By Lemma \ref{lemm:framePhiPsi},
\begin{eqnarray*}
\|\hat{G}_s\cdot\hat{\psi}_{j,s,m,p}\|_2^2 &\lesssim& \int_{\R^2}\Bigl|\sum_{j^{'}=0}^{\infty} |\hat{g}(A_{-j^{'}}\xi)|^2\Bigr|^2|\hat{\psi}_{j,0,m,p}(\xi)|^2d\xi \\
&\lesssim& \int_{\R^2}\sum_{j^{'}=0}^{\infty} |\hat{g}(A_{-j^{'}}\xi)|^2|\hat{\psi}_{j,0,m,p}(\xi)|^2d\xi
\\
&\lesssim& \sum_{j^{'} = 0}^{j+p}\int_{\R^2}\cdots + \sum_{j^{'} = j+p}^{\infty}\int_{\R^2}\cdots = \mathrm{(I)} + \mathrm{(II)}.
\end{eqnarray*}
Note that \eqref{eq:decay_varphi_1} and \eqref{eq:decay_g} imply that
\begin{equation}\label{decay01}
|\hat{\psi}_{j,0,m,p}(\xi)|^2 \lesssim 2^{-\frac{3}{2}j-p}\cdot\frac{\min\{1,|2^{-j}\xi_1|^{2\alpha}\}}{(1+|2^{-j}\xi_1|)^{2\beta}}
\cdot\frac{\min\{1,|2^{-j/2-p}\xi_2|^{2\alpha}\}}{(1+|2^{-j/2-p}\xi_2|)^{2\beta}}.
\end{equation}
Hence, using \eqref{eq:decay_g}, \eqref{eq:inequality} and \eqref{decay01}, can estimate (I) by
\begin{eqnarray*}
\mathrm{(I)} &\lesssim&  \sum_{j^{'}=0}^{j+p} 2^{-j/2-p}
\int_{\R}\frac{\min\{1,|2^{-j/2-p}\xi_2|^{2\alpha}\}}{(1+|2^{-j^{'}/2}\xi_2|)^{2\beta}}d\xi_2\int_{\R}\frac{2^{-j}}{(1+|2^{-j}\xi_1|)^{2\beta}}d\xi_1\\
&\lesssim& \sum_{j^{'} = 0}^{j+p}2^{-j/2-p}\int_{\R}
\frac{\min\{1,|2^{-j/2-p}\xi_2|^{2\alpha}\}}{(1+|2^{-j^{'}/2}\xi_2|)^{2\beta}}d\xi_2 \\
&\lesssim& 2^{-\alpha p}.
\end{eqnarray*}
Similarly, we can estimate (II) as follows:
{\allowdisplaybreaks
\begin{eqnarray*}
\mathrm{(II)} &\lesssim& \sum_{j^{'}=j+p}^{\infty}2^{-j}\int_{\R}\frac{\min\{1,|2^{-j^{'}}\xi_1|^{2\alpha}\}}
{(1+|2^{-j}\xi_1|)^{2\beta}}\WQ{d\xi_1}\int_{\R}\frac{2^{-j/2-p}}{(1+|2^{-j/2-p}\xi_2|)^{2\beta}}d\xi_2 \\
&\lesssim& \sum_{j^{'}=j+p}^{\infty} 2^{-j}\int_{\R}\frac{\min\{1,|2^{-j^{'}}\xi_1|^{2\alpha}\}}{(1+|2^{-j}\xi_1|)^{2\beta}}d\xi_1 \\
&\lesssim& 2^{-2\alpha p}.
\end{eqnarray*}
}
This proves (ii).

\subsection{Proof of Lemma \ref{lemm:technical_3}}\label{subsec:technical_3}

First, note that (i) is obvious from the definition of $\psi^{\sharp}_{\lambda}$ and
\[
|\langle f,(\psi_{\lambda}-\psi^{\sharp}_{\lambda})\rangle|^2 \leq
\|f\|^2_2 \|\psi_{\lambda}-\psi^{\sharp}_{\lambda}\|^2_2.
\]
Next, we estimate $\|\psi_{\lambda}-\psi^{\sharp}_{\lambda}\|^2_2$ to show (ii). By using \eqref{eq:decay_varphi_1},
\eqref{eq:decay_g}, and \eqref{eq:inequality}, we obtain
\begin{eqnarray*}
\|\psi_{\lambda}-\psi^{\sharp}_{\lambda}\|^2_2 &\lesssim& \int_{\R^2}
\Bigl| \sum_{j^{'} = 0}^{\lfloor j(1-\rho) \rfloor}|\hat{g}(A^{-1}_{j^{'}}\xi)|^2|\hat{\psi}_{j,0,0,p}(\xi)|\Bigr|^2 d\xi \\
&\lesssim& \int_{\R^2}
\sum_{j^{'} = 0}^{\lfloor j(1-\rho) \rfloor}|\hat{g}(A^{-1}_{j^{'}}\xi)|^2|\hat{\psi}_{j,0,0,p}(\xi)|^2 d\xi \\
&\lesssim& \sum_{j^{'} = 0}^{\lfloor j(1-\rho) \rfloor}\int_{\R^2}\frac{2^{-j^{'}}\min\{1,|2^{-j}\xi_1|\}^{\alpha}}{(1+|2^{-j^{'}}\xi_1|)^{\beta}}\cdot
\frac{2^{-p-j/2}}{(1+|2^{-j/2-p}\xi_2|)^{2\beta}} d\xi \\
&\lesssim& \sum_{j^{'} = 0}^{\lfloor j(1-\rho) \rfloor}2^{-(j-j^{'})\alpha} \lesssim 2^{-j\rho\alpha}.
\end{eqnarray*}
This proves our claim.

\subsection{Proof of Lemma \ref{lemm:technical_4}}\label{subsec:technical_4}

We only consider case $\ell = 1$, since the other case can be shown similarly.
By \eqref{eq:decay_varphi_1} and  \eqref{eq:decay_g}, we have
\begin{eqnarray*}
\lefteqn{\Bigl|\Bigl(\frac{\partial}{\partial \xi_2}\Bigr)\widehat{\Theta_{j-\WQ{\max\{\lfloor j(1-\rho) \rfloor,j_0\}}}*\psi^p}(\xi)\Bigr|}\\
&\lesssim& \sum_{j = 0}^{\infty} \Bigl|\Bigl(\frac{\partial}{\partial \xi_2}\Bigr)|\hat g(A_j\xi)|^2\hat\psi^p(\xi)\Bigr|
+ \sum_{j=0}^{\infty}\Bigl||\hat g(A_j\xi)|^2\Bigl(\frac{\partial}{\partial \xi_2}\Bigr)\hat \psi^p(\xi)\Bigr| \\
& & + \sum_{j = 1}^{\infty} \Bigl|\Bigl(\frac{\partial}{\partial \xi_2}\Bigr)|\hat g(A^{-1}_j\xi)|^2\hat\psi^p(\xi)\Bigr|
+ \sum_{j=1}^{\infty}\Bigl||\hat{g}(A^{-1}_j\xi)|^2\Bigl(\frac{\partial}{\partial \xi_2}\Bigr)\hat \psi^p(\xi)\Bigr| \\
&\lesssim& \sum_{j=0}^{\infty}2^{j/2}\frac{\min\{1,|2^{j}\xi_1|^{\alpha}\}\min\{1,|\xi_1|^{\alpha}\}\min\{1,|2^{-p}\xi_2|^{\alpha}\}}{(1+|2^{j}\xi_1|)^{\beta}
(1+|2^{j/2}\xi_2|)^{\beta}(1+|\xi_1|)^{\beta}(1+|2^{-p}\xi_2|)^{\beta}} \\
& & + \sum_{j=1}^{\infty}\frac{\min\{1,|2^{-j}\xi_1|^{\alpha}\}\min\{1,|\xi_1|^{\alpha}\}\min\{1,|2^{-p}\xi_2|^{\alpha}\}}{(1+|2^{-j}\xi_1|)^{\beta}
(1+|2^{-j/2}\xi_2|)^{\beta}(1+|\xi_1|)^{\beta}(1+|2^{-p}\xi_2|)^{\beta}} \\ &=& \mathrm{(I)} + \mathrm{(II)}
\end{eqnarray*}
Then we have
{\allowdisplaybreaks
\begin{eqnarray*}
\mathrm{(I)} &\lesssim& \sum_{j = 0}^{\infty} \frac{2^{-j/2}|\xi_1|\min\{1,|2^{j}|\xi_1|^{\alpha}\}\min\{1,|\xi_1|^{\alpha-1}\}
\min\{1,|2^{-p}\xi_2|^{\alpha}\}}{|\xi_1|(1+|2^j\xi_2|)^{\beta-1}(1+|2^{ j/2}\xi_2|)^{\beta}(1+|\xi_1|)^{\beta}(1+|2^{-p}\xi_2|)^{\beta}} \\
&\lesssim& \frac{\min\{1,|\xi_1|^{\alpha-1}\}}{(1+|\xi_2|)^{\beta-1}(1+|\xi_1|)^{\beta}}
\end{eqnarray*}
}
and
\begin{eqnarray*}
\mathrm{(II)} &\lesssim& \sum_{j = 1}^{\infty} \frac{\min\{1,|\xi_1|^{\alpha}\}2^{-p\gamma}|\xi_2|^{\gamma}2^{\frac{j}{2}\gamma}2^{-j\gamma}|\xi_1|^{\gamma}}{(1+|\xi_1|)^{\beta}
(1+|2^{-p}\xi_2|)^{\gamma}|\xi_2|^{\gamma}} \\
&\lesssim& \frac{\min\{1,|\xi_1|^{\alpha}\}}{(1+|\xi_1|)^{\beta-\gamma}(1+|\xi_2|)^{\gamma}}.
\end{eqnarray*}
These estimates for (I) and (II) then prove the lemma.

\subsection{Proof of Lemma \ref{lemm:technical_5}}\label{subsec:technical_5}
For some $q \in L^2(\R^2)$, set
\[
q_{j,k_s,{m}} := |\mathrm{det}(A_j)|^{1/2}q(S_{k_s}A_j\cdot - {m})
\]
for $j \ge 0$, $k_s \in \Z$ and $m \in \Z^2$. Further, assume that
\begin{equation*}
\supp(q_{j,k_s,m}) \subset A^{-1}_jS^{-1}_{k_s}\Bigl([-2^{\rho j}L,2^{\rho j}L] \times [-2^{\rho j/2}L,2^{\rho j/2}L]+m\Bigr)
\end{equation*}
for  some $L>0$. Provided that in addition, for $\alpha_1 \ge 5$, $\alpha_2 \ge 4$, and $h \in L^1(\R)$, we have
\[
|\hat{q}(\xi)| \lesssim \frac{\min\{1,|\xi_1|^{\alpha_1}\}}{(1+|\xi_1|)^{\alpha_2}(1+|\xi_2|)^{\alpha_2}}
\quad \mbox{and} \quad
\Bigl|\frac{\partial}{\partial \xi_2}\hat q(\xi) \Bigr| \leq |h(\xi_1)|\cdot \Bigl(1+\frac{|\xi_2|}{|\xi_1|}\Bigr)^{-\alpha_2},
\]
by following the proof of Proposition 2.2 in \cite{KL11}, we can show that
\begin{equation}\label{eq:est01}
|\langle f,q_{j,k_s,m}\rangle| \lesssim \frac{2^{-\frac{3}{4}j}2^{3\rho j}}{|k_s+2^{\WQ{\lceil j/2 \rceil}}\hat{s}|^3} \quad \text{if} \,\, |\hat s| \leq 3
\end{equation}
and
\begin{equation}\label{eq:est02}
|\langle f,q_{j,k_s,m}\rangle| \lesssim 2^{3\rho j}2^{-\frac{9}{4}}
\quad \text{if} \,\, |\hat s| > 3/2.
\end{equation}

We next choose $q_{j,k_s,{m}} := \psi^{\sharp}_{\lambda}$ for $\lambda = (j,s,m,p) \in \Lambda_s$ with $s = \WQ{s(\lceil j_0 /2\rceil,q_0)} \in \mathbb{S}$,
$s = \frac{k_s}{2^{\WQ{\lceil j/2 \rceil}}}$, and $j \ge j_0$. Hence, in particular, $q = \Theta_{j-\max\{\lfloor j(1-\rho) \rfloor,j_0\}}*\psi^p$. By Lemma \ref{lemm:technical_3}(i)
and Lemma \ref{lemm:technical_4}, we derive \eqref{eq:est01} and \eqref{eq:est02} for this choice. Thus, by Lemma \ref{lemm:technical_3}(ii),
\begin{equation}\label{eq:eestimate01}
|\langle f,\psi_{\lambda}\rangle| \lesssim \frac{2^{-\frac{3}{4}j}2^{3\rho j}}{|k_s+2^{\WQ{\lceil j/2 \rceil}}\hat{s}|^3} + 2^{-\frac{j}{2}\rho \alpha} \quad \text{if} \,\, |\hat s| \leq 3
\end{equation}
and
\begin{equation}\label{eq:eestimate02}
|\langle f,\psi_{\lambda}\rangle| \lesssim 2^{3\rho j}2^{-\frac{9}{4}j} + 2^{-\frac{j}{2}\rho \alpha} \quad \text{if} \,\, |\hat s| > 3/2.
\end{equation}
Finally, Lemma \ref{lemm:technical_2}(i), \eqref{eq:eestimate01}, and \eqref{eq:eestimate02} imply (i) and (ii) for $\alpha \ge \frac{6}{\rho}$.

\subsection{Proof of Corollary \ref{cor:approx}}\label{subsec:technical_0}

We will retain all notations used in the proof of Theorem \ref{thm:sparsity}. In the considered case,
a compactly supported function $\psi \in L^2(\R^2)$ can be chosen so that shearlets are defined by
\[
\psi_{\lambda} = |\mbox{det}(A_j)|^{1/2}\psi(S_kA_j\cdot - \mbox{diag}(c_1,c_2)m)
\]
for $\lambda \in \Lambda_s$ with $s = \WQ{s(\lceil j_0 /2\rceil,q_0)} \in \mathbb{S}$ and $k \in \Z$ with \WQ{$s = \tfrac{k}{2^{\lceil j/2 \rceil}}$}.
We emphasize that the additional oversampling matrix $D_p$ is not needed and the index set $\Lambda_s$ originally
defined in Definition \ref{def:dualizableshearlets} is given as
\[
\Lambda_s = \{\lambda = (j,s,m) : j \ge j_0, m \in \Z^2\}.
\]
Further, the shearlet generator $\psi$ can be chosen so that it satisfies a directional vanishing moment condition
(compare Lemma \ref{lemm:technical_4}) in frequency, and that the shearlets $\psi_{\lambda}$ satisfy a support
condition (compare Lemma \ref{lemm:technical_3}(i)) with $\rho = 0$, which yields
\begin{equation}\label{eq:new_support}
\mbox{supp}(\psi_{\lambda}) \subset S^{-1}_sA^{-1}_j([-C,C]^2+m) \quad \mbox{for some}\,\, C>0.
\end{equation}
These two conditions imply Lemma \ref{lemm:technical_5} (i)--(ii) with $\rho = 0$, which can be derived by using
similar arguments as in the proof of Proposition 2.2 in \cite{KL11}.

Now consider index sets $\Lambda^0_j$, $\Lambda^0_{j,r}$, $\tilde{\Lambda}^r_j$ and $\tilde{\Lambda}^0$ as defined in \eqref{eq:index01} -- \eqref{eq:subindex}
for $r = 0,1$. Notice that for those index sets, the additional index $p \in \N_0$ for the sampling matrix $D_p$ is not needed, and we have
\[
\Lambda^0_j = \{\lambda = (j',s,m) \in \Lambda^0 : j' = j\}
\]
as well as
\[
\Lambda^{0}_{j,r} = \{\lambda = (j,s,m) \in \WQ{\Lambda^0_j} : \mathrm{int}(\supp(\psi_{\lambda})) \cap \mathrm{int}(Q_{j,\ell}) \cap \Gamma \neq \emptyset, \,\,
\text{for} \,\, Q_{j,\ell} \in \mathcal{Q}^{r}_j\}
\]
for $r = 0,1$. Moreover, for $j < \WQ{\lceil J/4 \rceil}$, we have $\tilde{\Lambda}^0_j = \Lambda^0_j$ and, for $j \ge \WQ{\lceil J/4 \rceil}$,
\[
\tilde{\Lambda}^0_j = \{\lambda = (j,s,m) \in \WQ{\Lambda^0_j} : \mathrm{int}(\supp(\psi_{\lambda})) \cap \mathrm{int}(Q_{j,\ell}) \cap \Gamma \neq \emptyset,
|k_{j,s} + \WQ{2^{\lceil j/2 \rceil}}\hat{s}_{j,\ell}| \leq 2^{\frac{J-j}{4}}  \hspace*{-0.1cm} , Q_{j,\ell} \in \WQ{\mathcal{Q}^0_j} \}.
\]
Also,
\[
\tilde{\Lambda}^1_j = \{\lambda = (j,s,m) \in \WQ{\Lambda^0_j} : \mathrm{int}(\supp(\psi_{\lambda})) \cap \mathrm{int}(Q_{j,\ell}) \cap \Gamma \neq \emptyset \,\,
\text{for} \,\, Q_{j,\ell} \in \mathcal{Q}^1_j \}.
\]

Applying the same estimate as \eqref{eq:counting} with those index sets $\tilde{\Lambda}^r_j$, we obtain
\begin{equation*}
\#(\tilde{\Lambda}^0) \lesssim J2^{\frac{J}{2}}
\end{equation*}
Note that $\rho = 0$ in \eqref{eq:counting}, since the additional index $p \in \N_0$ is not required for $\tilde{\Lambda}^r_j$ in this case.
Applying \eqref{eq:estimate01}---\eqref{eq:estimate05} with \eqref{eq:new_support} and Lemma \ref{lemm:technical_5} (i)--(ii) with $\rho = 0$, we obtain
\[
\|f-f_N\|^2_2 \lesssim \sum_{\lambda \notin \Lambda_N} |\langle f,\psi_{\lambda}\rangle|^2 \lesssim
2^{-J}
\]
with $N \sim J2^{\frac{J}{2}}$. This proves our claim.

\end{document}